\documentclass[a4paper,oneside,reqno,english]{amsart}
\usepackage[utf8]{inputenc}
\usepackage[T1]{fontenc}

\DeclareFontFamily{OMX}{mlmex}{}
\DeclareFontShape{OMX}{mlmex}{m}{n}{%
   <->mlmex10%
   }{}%
\usepackage{mlmodern}

\usepackage[hscale=0.666, vscale=0.7]{geometry}
\usepackage{babel}
\usepackage[numbers,sort&compress]{natbib}

\usepackage{hyperref}
\hypersetup{%
  colorlinks=true,%
  citecolor=[RGB]{120,29,126},%
  pdfauthor={Jean-François Burnol},%
  pdfsubject={Zeta series with missing digits},%
  pdfstartview=FitH,%
  pdfpagemode=UseNone,%
}

\usepackage{centeredline}
\usepackage{booktabs}
\usepackage{amsmath,amsthm,amssymb}
\usepackage{mathtools}
\DeclarePairedDelimiterX\Iffint[2]{\lbrack\!\lbrack}{\rbrack\!\rbrack}{#1...#2}
\DeclarePairedDelimiterX\Ioo[2]{\lparen}{\rparen}{#1,#2}
\DeclarePairedDelimiterX\Iof[2]{\lparen}{\rbrack}{#1,#2}
\DeclarePairedDelimiterX\Ifo[2]{\lbrack}{\rparen}{#1,#2}
\DeclarePairedDelimiterX\Iff[2]{\lbrack}{\rbrack}{#1,#2}
\newcommand\sD{\mathsf{D}}
\newcommand\NN{\mathbb{N}}
\newcommand\ZZ{\mathbb{Z}}
\newcommand\RR{\mathbb{R}}
\newcommand\CC{\mathbb{C}}
\newcommand\cW{\mathcal{W}}

\newcommand\cB{\mathcal{B}}
\newcommand\e{\mathsf{e}}
\newcommand\dmu{\mathrm{d}\mu}
\newcommand\dt{\mathrm{d}t}
\newcommand\du{\mathrm{d}u}
\newcommand\dx{\mathrm{d}x}

\allowdisplaybreaks

\newcommand\arxivurl[1]{\href{https://arxiv.org/abs/#1}{\textsf{arXiv:#1}}}

\newtheorem{theorem}{Theorem}[section]

\newtheorem{lemma}[theorem]{Lemma}
\newtheorem{proposition}[theorem]{Proposition}
\theoremstyle{definition}
\newtheorem{definition}[theorem]{Definition}
\newtheorem{remark}[theorem]{Remark}

\title[Series for $\zeta(s)$]{%
  Some series representing the zeta function for \lowercase{\boldmath$\Re(s)>1$}}

\author[J.-F. Burnol]{Jean-François Burnol}

\address{Université de Lille\\
  Faculté des Sciences et technologies\\
  Département de mathématiques\\
  Cité Scientifique\\
  F-59655 Villeneuve d'Ascq cedex\\
  France
}
\email{jean-francois.burnol@univ-lille.fr}

\subjclass{Primary: 68R15, 11M06, 11M41; Secondary: 11A63, 11B85, 11Y35, 11Y60, 33F05.}
\keywords{Measures on words,
  Dirichlet series,
  ellipsephic numbers,
  representations of the Riemann zeta function,
  exponential generating functions of moments.}

\date{July 20, 2026. This preprint updates the abstract, references, and the
  numbering of propositions to match the version to appear in
  \href{https://www.aimsciences.org/FCNT}{Frontiers in Combinatorics and
    Number Theory}. Despite the impression one gathers from the title and
  abstract, the bulk of the paper is devoted to the general zeta series with
  missing digits. The specialization to the
  Riemann zeta function is done only in the introductory section and in the
  last one. See \cite{burnolzeta_umosc} for the continued study of the
  case of the Riemann zeta function.}

\begin{document}
\begin{abstract}
  We represent the Riemann zeta function in the half-plane $\Re s >1$ via
  series whose terms admit geometrically decreasing bounds. This is based upon
  simple combinatorics of certain measures, and their associated moments, which
  are defined on words from an alphabet of digits in a given radix $b$.
\end{abstract}

\maketitle

\vspace*{-2\baselineskip}
{\footnotesize
\section*{Numbering of propositions}

\centeredline{\begin{tabular}{ll|ll|ll}
\toprule
  \emph{here and FCNT}&\emph{earlier versions}&
  \emph{here and FCNT}&\emph{earlier versions}&
  \emph{here and FCNT}&\emph{earlier versions}\\
\midrule
  Definition 2.1&   Definition 1&     Lemma 2.8 &   Lemma 2 &              Defintion 3.1 &   Defintion 4 \\
  Definition 2.2&   Definition 2&     Proposition 2.9 &   Proposition 4 &  Proposition 3.2 &   Proposition 6 \\
  Lemma 2.3&   Lemma 1&               Lemma 2.10 &   Lemma 3 &             Proposition 3.3 &   Proposition 7 \\
  Definition 2.4&   Definition 3&     Proposition 2.11 &   Proposition 5&  Theorem 4.1 &   Theorem 2 \\
  Proposition 2.5&   Proposition 1&   Theorem 2.12 &   Theorem 1&          Remark 4.2 &   Remark 3  \\
  Proposition 2.6&   Proposition 2&   Remark 2.13 &   Remark 1\\
  Proposition 2.7&   Proposition 3&   Remark 2.14 &   Remark 2\\
\bottomrule
\end{tabular}}
\bigskip
}

\section{Main result}

In this section, we specialize to $b=2$ the Theorem \ref{thm:conclusion} which
concludes this paper and applies to all integers $b>1$.

Let $s\in\CC$, $\Re s>1$. Define coefficients $u_m^*(s)$ via a linear recurrence:
\begin{align}
\notag
  u_0^*(s) &= \frac{2^s}{2^s-2},
\\
\label{eq:rec}
  m\geq1&\implies u_m^*(s) = \frac1{2^{s+m}-2}
          \sum_{j=1}^{m} \frac{(s+m)\dots(s+m-j+1)}{j!}u_{m-j}^*(s).
\end{align}
Let $\ell$ be an integer at least $2$.  Then
  \begin{equation}\label{eq:1}
    \begin{split}
      \zeta(s) = \sum_{0<n<2^{\ell-1}} \frac1{n^s} &+ \frac{2^{s}}{2^{s}
        -2}\sum_{2^{\ell-1}\leq n<2^{\ell}} \frac1{n^s}
      \\
      &+ \sum_{m=1}^\infty (-1)^m \frac {s\,u_m^*(s)}{s+m}
      (\sum_{2^{\ell-1}\leq n<2 ^{\ell}} \frac1{n^{s+m}}).
    \end{split}
  \end{equation}
  The series with alternating signs at the end of \eqref{eq:1} is absolutely
  convergent.  For $s$ real, its partial sums provide lower and upper bounds
  depending only on parity of number of terms.
  
Explicitly, with $\ell=2$:
\begin{equation*}
      \zeta(s) = 1 
      +  \frac{2^{s}}{2^{s} -2}\bigl(\frac1{2^s} + \frac1{3^s}\bigr)\\
      + \sum_{m=1}^\infty (-1)^m \frac {s\,u_m^*(s)}{s+m}\bigl(\frac1{2^{s+m}} + \frac1{3^{s+m}}\bigr).
\end{equation*}
With $\ell=3$, and \emph{the same} $u_m^*(s)$:
\begin{equation*}
  \begin{split}
    \zeta(s) = 1 &+ 2^{-s} + 3^{-s}
    +  \frac{2^{s}}{2^{s} -2}\bigl(4^{-s}+5^{-s}+6^{-s}+7^{-s}\bigr)\\
    &+ \sum_{m=1}^\infty (-1)^m \frac {s\,u_m^*(s)}{s+m}
       \bigl(4^{-s-m}+5^{-s-m}+6^{-s-m}+7^{-s-m}\bigr).
  \end{split}
\end{equation*}
And similarly for $\ell=4$, $5$, \dots.  Previous experience from
\cite{burnolkempner,burnolirwin} suggests that the formulas are well suited to
obtain hundreds of digits, at least for real $s$.  We
tested briefly using Python double float type, and numerical results matched
(up to the unavoidable rounding errors) with those provided from using the
\texttt{mpmath.zeta()} function at a higher precision.

Due to the linear cost intrinsic to the recurrence \eqref{eq:rec}
(and the fact that this recurrence impedes parallelization), the formula
\eqref{eq:1} will probably not be of much interest for obtaining tens of
thousands of digits or more.  Perhaps in relation to this, these series may
well prove to be new, at least we found no mention of anything similar in
references such as \cite{borwbradcran2000}.

A priori bounds allow to estimate how many terms will be enough for a given
target precision:
\begin{align*}
s=\sigma>1:  \frac{1}{2^\sigma-2} &<  u_m^*(\sigma) \leq \frac{2^\sigma}{2^\sigma-2}\;,\\
s=\sigma+it:
  |u_m^*(s)|&\leq \frac{\Gamma(\sigma +1)}{|\Gamma(s+1)|} \frac{2^\sigma}{|2^s-2|}\;.
 \end{align*}
 This proves that the series in \eqref{eq:1} converges geometrically (with a
 bonus $O(m^{-1})$ factor) with ratio $1/2^{\ell-1}$, also for a complex $s$,
 $\Re s=\sigma>1$.  The exponential increase of $1/|\Gamma(s)|$ on vertical
 lines suggests that large imaginary parts are a challenge to numerical
 implementation of the series, and indeed (see the discussion after Theorem
 \ref{thm:conclusion}), up to a factor of $2^\sigma$ the above upper bound
 gives at least for some $s$ with large imaginary parts a good idea of the
 order of magnitude of $|u_m^*(s)|$.

Defining $u_m(s)$ such that
\begin{equation*}
  u_m^*(s) = \frac{(s+1)_m}{m!} u_m(s) = \frac{(s+1)\dots(s+m)}{m!} u_m(s),
\end{equation*}
it turns out that $|2^s - 2||u_m(s)| \leq (2^\sigma-2)u_m(\sigma)$, $\Re s =
\sigma>1$, from which the above upper bound is derived, and that for $m\geq1$:
\begin{equation}\label{eq:ber2}
  u_m(s) =  \frac1{m+1}\frac{2^{s}}{2^s -2} - \frac{2^{s+1}}{2(2^{s+1}-2)} +
  \sum_{1\leq k \leq \lfloor \frac m2 \rfloor}
   \frac{m!}{(m-2k+1)!}\frac{B_{2k}}{(2k)!}\frac{2^{s+2k}}{2^{s+2k}-2}\;.
\end{equation}
(So, $u_m(s)$ is $2\pi i/\log(2)$ periodic). But, this formula is much less
appropriate for numerical computations than \eqref{eq:rec}: for example with
$s=3$ and $m=30$ some individual summands prove to be larger by many orders of
magnitude than $u_m(s)$, and computations using the \textsf{IEEE-754} ``double
float'' type showed spectacular loss of precision.

The above expression gives the meromorphic continuation of the $u_m(s)$
coefficients to the whole complex plane.  In particular, the linear recurrence
\eqref{eq:rec}, if started with the initial value $1$ at $m=0$, makes sense
for $\Re s >0$. Especially with $s=1$, the solution to this recurrence is the
constant sequence $(1)_{m\geq0}$, so it suggests:
\begin{equation}
  \lim_{s\to1}\frac{2^s -2}{2^s}\zeta(s) =
  \sum_{m=0}^\infty (-1)^m\sum_{2^{\ell-1}\leq n<2^\ell} \frac{1}{(m+1)n^m}\;.
\end{equation}
On the right-hand side, we recognize $\sum_{2^{\ell-1}\leq n<2^\ell}
\log\frac{n+1}n = \log 2$. Thus, the identity obtained at $s=1$ is indeed
correct... The analytic continuation of \eqref{eq:1} to the complex plane
is the object of \cite{burnolzeta_umosc}, which uses the present work
as starting point.

The paper is organized as follows.  In the first section, we extend our
earlier framework \cite{burnolkempner}, which involves certain measures and
moments tailored to compute harmonic series with restrictions on the allowed
digits.  For a given radix $b$ and choice of admissible digits, we develop the
apparatus to compute the associated Dirichlet series not only at $s=1$ but
everywhere in the half-plane of convergence (such ``restricted'' Dirichlet
series have been considered earlier
\cite{kohlspil2009,nathanson2021jnt,alloshalstip2025}).

In the second section we study the exponential moment generating function, and
establish some of its analytic properties.  These analytic aspects are related
to an old theme going back all the way to Hardy
\cite{hardy1907,mendsebb1999,keatread2000,
  balamendsebb2005,zhang2011,zhang2023}.

In the final section, we remove all restrictions on the radix-$b$ digits, so
that we are computing the Riemann zeta function $\zeta(s)$, $\Re s>1$.  From
the point of view of obtaining ``explicit'' formulas, the removal of all
restrictions brings a substantial theoretical simplification and allows to
express the series coefficients in terms of Bernoulli numbers as is shown in
equation \eqref{eq:ber2} for $b=2$.  But, as pointed out already, these
``explicit'' formulas are mostly of theoretical interest, the recurrence
\eqref{eq:rec} being numerically much more efficient than equation
\eqref{eq:ber2}, despite having roughly twice as many terms.

\section{Measures on words, and moments}

Let $b>1$ an integer. Let $\cW$ be the space of words (also called finite
strings) on the alphabet $\sD=\Iffint{0}{b-1}$ of the $b$-ary digits.  The
empty word is noted $\epsilon$. The length of a word $w$ is noted $\ell(w)$.
The power notation $w^j$ means concatenation ($w^2=ww$, etc\dots). And
$w^0=\epsilon$.
\begin{definition}\label{def:1}
  For $w=d_1\dots d_l\in \cW$ we say that $w$ ``represents'' the integer
  $n(w)= d_1b^{l-1} + \dots + d_l \in \NN$.  In particular $n(\epsilon)=0$.
  We define $x(w)= n(w)/b^{\ell(w)}$.  In other terms, $x(w)$ is obtained in
  $b$-radix representation from positioning the word $w$ immediately to the
  right of the radix separator. It belongs to $\Ifo01$.

  For $n$ a non-negative integer, $l=\ell(n)$ is the length of the
  minimal-length representation of $n$ by a word, i.e.\@ it is the smallest
  non-negative integer such that $n<b^l$.

  Let $A$ be some subset of $\sD$.  Its elements will be called the
  ``admissible'' digits, and words using only them are said to be
  ``admissible''.  The admissible integers are those whose minimal
  $b$-representation is an admissible word.  So $0$ is always an admissible
  integer, but is not necessarily an admissible digit.
\end{definition}
Throughout this section it is assumed that the set $A$ of admissible digits is
neither empty nor reduced to the singleton $\{0\}$.  So the set $x(\cW)$
contains positive elements.  Contrarily to \cite{burnolkempner} it is now
allowed for $A$ to be the full alphabet of digits.  We let $N$ be the
cardinality of $A$ and $N_1$ the one of $A\setminus\{0\}$. So $0< N_1\leq
N\leq b$.

\begin{definition}
  Let $s\in\RR$.  We define a positive measure $\mu_{A,s}$ on $\cW$.
  \begin{itemize}
  \item If the word $w$ is admissible,  $\mu_{A,s}(\{w\})= b^{-s\ell(w)}$.
  \item Else, $\mu_{A,s}(\{w\})= 0$.
  \end{itemize}
  For $s$ complex non real, such that the positive measure $\mu_{A,\Re s}$ is
  finite (see next), we use the same formulas. So $\mu_{A,s}$ is a complex
  measure and its measure of  variations is $\mu_{A,\Re s}$.
\end{definition}
Observe that $\mu_{A,s}(\{\epsilon\}) = 1$, whenever $\mu_{A,s}$ is defined,
and that $\mu_{A,s+2\pi i/\log(b)} = \mu_{A,s}$ if $\mu_{A,s}$ is a complex measure.

We compute the total mass for real $s$:
\begin{equation*}
  \mu_{A,s}(\cW) = \sum_{l=0}^\infty b^{-sl}N^l = 
  \begin{cases}
  \frac{b^s}{b^s -N} &(s>\log_b(N)),\\
  +\infty&(s\leq \log_b(N)).
  \end{cases}
\end{equation*}
Thus, $\mu_{A,s}$ is well-defined as a complex measure if and only if
$\mu_{A,\Re s}$ is finite if and only if $\Re s > \log_b(N)$.  Let us record
this formula for the total ``mass'' (but this terminology is a bit misleading
if $s$ is non-real) for $s$ in the open half-plane of convergence:
\begin{equation}\label{eq:u0}
  \mu_{A,s}(\cW) = \frac{b^s}{b^s -N}.
\end{equation}
Except for the sole case with $A=\sD$, one has $N<b$, $\log_b(N)<1$, so then $s=1$
is in the open half-plane of those complex numbers to which we have associated
a complex measure on the space of words $\cW$.

We record the following observation (which is not
used elsewhere) for real $s$:
\begin{lemma}
  For any non-negative bounded function $\phi$ on $\cW$, such that there is at
  least one admissible non-empty word $w$ with $\phi(w)>0$, the function
  $g(\sigma)= \int_{\cW} \phi(w)d\mu_{A,\sigma}(w)$ is positive, strictly
  decreasing, and log-convex on $\Ioo{\log_b(N)}{\infty}$.
\end{lemma}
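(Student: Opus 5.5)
Write $g$ explicitly as a sum over lengths: $g(\sigma)=\sum_{w\text{ admissible}}\phi(w)\,b^{-\sigma\ell(w)}=\sum_{l\ge0} c_l\, b^{-\sigma l}$, where $c_l=\sum_{w\text{ admissible},\,\ell(w)=l}\phi(w)\ge0$. Since $0\le\phi\le M$, we have $c_l\le M N^l$. Hence the series converges for $\sigma>\log_b(N)$, by comparison with the total mass \eqref{eq:u0}. It also converges locally uniformly there, together with all its $\sigma$-derivatives, because $l^k b^{-\sigma l}N^l$ is summable uniformly on $\sigma\ge\sigma_0>\log_b(N)$. So $g$ is smooth and can be differentiated termwise. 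The hypothesis gives some $l_0\ge1$ with $c_{l_0}>0$.

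\emph{Positivity.} All terms are non-negative and the term $c_{l_0}b^{-\sigma l_0}$ is positive, so $g(\sigma)>0$.

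\emph{Strict decrease.} Termwise differentiation gives $g'(\sigma)=-\log b\sum_{l\ge1} l\,c_l\,b^{-\sigma l}$. This is $\le -\log(b)\,l_0c_{l_0}b^{-\sigma l_0}<0$. The $l=0$ term, which corresponds to $\phi(\epsilon)$, is constant in $\sigma$ and does not affect the derivative. This is why we need a non-empty word with $\phi(w)>0$.

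\emph{Log-convexity.} The cleanest route is that each function $\sigma\mapsto c_l b^{-\sigma l}=c_l e^{-\sigma l\log b}$ is log-linear, hence log-convex when $c_l>0$. A convergent sum of log-convex functions is log-convex. Equivalently, one can prove $g g''\ge (g')^2$ directly by Cauchy--Schwarz:
\begin{equation*}
\Bigl(\sum_l l\,c_l b^{-\sigma l}\Bigr)^2\le\Bigl(\sum_l c_l b^{-\sigma l}\Bigr)\Bigl(\sum_l l^2 c_l b^{-\sigma l}\Bigr),
\end{equation*}
and the factors $(\log b)^2$ match on both sides. I would present the Cauchy--Schwarz (or Hölder) version: for $\sigma=t\sigma_1+(1-t)\sigma_2$, Hölder's inequality with exponents $1/t$ and $1/(1-t)$, applied to the measure $\phi\,\dmu_{A,0}$ restricted to admissible words (counting measure weighted by $\phi$), yields $g(\sigma)\le g(\sigma_1)^t g(\sigma_2)^{1-t}$. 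This avoids needing derivatives at all.

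There is no real obstacle here. The only point needing care is justifying termwise differentiation, or the use of Hölder on an infinite sum, via the domination $c_l\le MN^l$. Even that is avoidable with the Hölder formulation, which only uses absolute convergence.
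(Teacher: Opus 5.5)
Your proof is correct and follows essentially the same route as the paper. The paper writes $g$ as the constant $\phi(\epsilon)$ plus a non-negative combination of the strictly decreasing, log-convex functions $\sigma\mapsto b^{-l\sigma}$ with $l\geq1$, at least one coefficient being positive. You supply the convergence and termwise-differentiation details, or the Hölder argument, that the paper leaves implicit.
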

\begin{proof}
  The quantity $g(\sigma)$ is, up to the additive constant $\phi(\epsilon)$, a
  positive combination of strictly decreasing log-convex functions
  $\sigma\mapsto b^{-l\sigma}$, for some indices $l\geq1$ at least.
\end{proof}

\begin{definition}\label{def:um}
  The ``moments''  $u_{A,s}(m)$ of the complex measure $\mu_{A,s}$ are:
  \begin{equation*}
    u_{A,s}(m) = \int_{\cW} x(w)^m \dmu_{A,s}(w).
  \end{equation*}
\end{definition}
Note that the moments are invariant under $s\mapsto s + 2\pi i/\log(b)$.
Observe further, using that $A$ is neither empty nor reduced to $\{0\}$, the strict
inequality:
\begin{equation*}
  \Im s\notin \frac{2\pi\ZZ}{\log b}\implies |u_{A,s}(m)|< u_{A,\Re s}(m).
\end{equation*}
Indeed, if $a\neq0$ is any admissible digit it contributes in particular
$a^mb^{-m-s}+(ba+a)^mb^{-2m-2s}$ and the modulus of this can be equal to the
sum of the moduli of the two summands only if their ratio is a positive real
number, i.e.\@ if and only if $b^s>0$.  We obtain in Proposition
\ref{prop:bound}, stated later, a (still relatively trivial) quantitative
improvement on the above ``trivial'' estimate.

As we have assumed that $A$ is
neither empty nor reduced to the sole digit $0$, the measure is not supported
only on those $w$'s such that $x(w)=0$.  Consequently, if $s$ is real (and
$>\log_b N$), the sequence of moments is strictly decreasing.  And for every
complex $s$ with $\Re(s)>\log_b N$, the moments converge to zero as
$m\to\infty$, from dominated convergence (or elementary arguments).

\begin{proposition}\label{prop:recu}
  The moments of $\mu_{A,s}$ obey the following recurrence relation:
\begin{equation}\label{eq:recu}
  m\geq1\implies
  (b^{m+s} - N)u_{A,s}(m) = \sum_{j=1}^{m} \binom{m}{j}(\sum_{a\in A}a^j) u_{A,s}(m-j).
\end{equation}
\end{proposition}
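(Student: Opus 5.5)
The plan is to decompose the space of words $\cW$ according to the \emph{last} digit, so that each moment of $\mu_{A,s}$ is re-expressed through the moments of lower order. Every non-empty word is uniquely $w=va$ with $v\in\cW$ and $a\in\sD$. Then $\ell(va)=\ell(v)+1$ and $n(va)=b\,n(v)+a$, hence
\begin{equation*}
  x(va)=\frac{b\,n(v)+a}{b^{\ell(v)+1}}=\frac{x(v)+a\,b^{-\ell(v)}}{b}.
\end{equation*}
This last expression is inconvenient, because of the factor $b^{-\ell(v)}$. So I will instead use the decomposition according to the \emph{first} digit: $w=av$. Then
\begin{equation*}
  x(av)=\frac{a+x(v)}{b}.
\end{equation*}
Moreover, $av$ is admissible if and only if $a\in A$ and $v$ is admissible, and in that case $\mu_{A,s}(\{av\})=b^{-s}\mu_{A,s}(\{v\})$. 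The empty word contributes $0^m=0$ to the $m$-th moment for $m\geq1$.

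Hence, for $m\geq1$ and $\Re s>\log_b N$,
\begin{equation*}
  u_{A,s}(m)=\sum_{a\in A}\int_{\cW}\Bigl(\frac{a+x(v)}{b}\Bigr)^m b^{-s}\,\dmu_{A,s}(v)
  =b^{-m-s}\sum_{a\in A}\sum_{j=0}^m\binom mj a^j\,u_{A,s}(m-j).
\end{equation*}
The interchange of the finite sums with the integral is justified by absolute convergence, since $\mu_{A,s}$ has finite variation $\mu_{A,\Re s}$ and $0\le x\le 1$. The $j=0$ term equals $N\,u_{A,s}(m)$, with the convention $a^0=1$, including for $a=0$. Multiplying by $b^{m+s}$ and moving that term to the left gives
\begin{equation*}
  (b^{m+s}-N)\,u_{A,s}(m)=\sum_{j=1}^m\binom mj\Bigl(\sum_{a\in A}a^j\Bigr)u_{A,s}(m-j),
\end{equation*}
which is \eqref{eq:recu}.

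The only delicate points are bookkeeping. First, the convention $0^0=1$ must be applied for the digit $0\in A$ when collecting the $j=0$ terms, which is why the left side carries $N$ rather than $N_1$. Second, the empty word must be excluded correctly: it contributes nothing for $m\geq1$, which is exactly why the recurrence requires $m\geq1$. Finally, the rearrangement of the measure, viewed as a sum over $a\in A$ of pushforwards of $b^{-s}\mu_{A,s}$, should be stated as an identity of absolutely convergent sums over words.
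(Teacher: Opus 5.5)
Your proof is correct and follows the paper's argument: decompose non-empty words by their first digit, use $x(aw)=b^{-1}\bigl(a+x(w)\bigr)$ together with the weight factor $b^{-s}$, expand binomially, and move the $j=0$ term $N\,u_{A,s}(m)$ to the left-hand side. The only cosmetic difference is that the paper writes the identity for all $m\geq0$, with a $\delta_0(m)$ term for the empty word (which also recovers $u_{A,s}(0)=b^s/(b^s-N)$), whereas you restrict to $m\geq1$ from the start.
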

\begin{proof}
  We decompose non-empty words as a one-digit prefix $a$ and a suffix of
  length one less.  The ``weight'' of $aw$ is $b^{-s}$ times the weight of
  $w$.  In the next equations $\delta_0$ is the Dirac delta function.  Let
  $m\geq0$:
\begin{align*}
  u_{A,s}(m) &= \delta_0(m) + \sum_{a\in A}\int_{\cW} x(aw)^m b^{-s}\dmu_{A,s}(w),\\
            &= \delta_0(m) + \sum_{a\in A}\int_{\cW} b^{-m}\bigl(a + x(w)\bigr)^m b^{-s}\dmu_{A,s}(w),\\
  u_{A,s}(m) &= \delta_0(m) + b^{-m-s}\sum_{j=0}^m \binom{m}{j}(\sum_{a\in A}a^j) u_{A,s}(m-j).
\end{align*}
For $m=0$ we recover $u_{A,s}(0) = \frac{b^s}{b^s-N}$.  For $m>0$, we obtain
\eqref{eq:recu}.
\end{proof}
Let us record here:
\begin{align}\label{eq:u1}
  u_{A,s}(1) &= \frac{b^s\sum_{a\in A} a}{(b^s -N)(b^{s+1}-N)}\;,\\
\label{eq:u2}
  u_{A,s}(2) &= \frac{b^s\left(2(\sum_{a\in A} a)^2 + 
                (b^{s+1}-N)\sum_{a\in A} a^2\right)}{(b^s -N)(b^{s+1}-N)(b^{s+2}-N)}\;.
\end{align}

The recurrence formula allows a quantitative improvement upon the
``trivial'' estimate $|u_{A,s}(m)|\leq u_{A,\sigma}(m)$, $\sigma=\Re s$.
\begin{proposition}\label{prop:bound}
  Let $s=\sigma+it$, $\sigma>\log_b N$. There holds:
  \begin{equation*}
    |u_{A,s}(m)|\leq \frac{b^\sigma - N}{|b^s - N|} u_{A,\sigma}(m).
  \end{equation*}
\end{proposition}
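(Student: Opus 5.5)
Induction on $m$, using the recurrence of Proposition~\ref{prop:recu}. Set $c=\frac{b^\sigma-N}{|b^s-N|}$ and observe first that $c\le 1$, since $|b^s-N|\ge |b^s|-N=b^\sigma-N>0$. For $m=0$ the claim is an equality. Indeed, by \eqref{eq:u0},
\begin{equation*}
  |u_{A,s}(0)|=\frac{b^\sigma}{|b^s-N|}=c\,\frac{b^\sigma}{b^\sigma-N}=c\,u_{A,\sigma}(0).
\end{equation*}
The inductive step is the heart of the matter. For $m\ge1$, the quantity $u_{A,s}(m)$ is expressed through the recurrence as a combination, with nonnegative coefficients $\binom mj\sum_{a\in A}a^j$, of the earlier moments $u_{A,s}(m-j)$ with $1\le j\le m$. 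The only factor depending on $s$ in a non-modulus way is the prefactor $1/(b^{m+s}-N)$.

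Assume $|u_{A,s}(k)|\le c\,u_{A,\sigma}(k)$ for all $k<m$. Taking moduli in \eqref{eq:recu} and using the triangle inequality gives
\begin{equation*}
  |b^{m+s}-N|\,|u_{A,s}(m)|\le \sum_{j=1}^m\binom mj\Bigl(\sum_{a\in A}a^j\Bigr)c\,u_{A,\sigma}(m-j)=c\,(b^{m+\sigma}-N)\,u_{A,\sigma}(m),
\end{equation*}
where the last equality is \eqref{eq:recu} applied at the real point $\sigma$. It remains to check that
\begin{equation*}
  |b^{m+s}-N|\ge b^{m+\sigma}-N,
\end{equation*}
which again follows from $|b^{m+s}|=b^{m+\sigma}$ and the reverse triangle inequality, the right-hand side being positive since $m\ge1$ and $b^\sigma>N$. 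Dividing through gives $|u_{A,s}(m)|\le c\,u_{A,\sigma}(m)$, which closes the induction.

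There is no real obstacle here. The only point needing care is that the induction hypothesis must carry the factor $c$ from the base case and not just the trivial bound. This works because the recurrence is homogeneous for $m\ge1$: the Dirac term $\delta_0(m)$ only enters at $m=0$, so the constant $c$ propagates unchanged through the linear, positive-coefficient recurrence. At each step we simply discard the additional gain $\frac{b^{m+\sigma}-N}{|b^{m+s}-N|}\le1$.
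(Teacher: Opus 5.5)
Your proof is correct and is essentially the paper's argument. The paper normalizes to $c_m(s)=u_{A,s}(m)/u_{A,s}(0)$, which obeys the same recurrence with initial value $1$, and proves $|c_m(s)|\le c_m(\sigma)$ by the same induction, using $|b^{m+s}-N|\ge b^{m+\sigma}-N$ and the nonnegativity of the coefficients; carrying the constant $\frac{b^\sigma-N}{|b^s-N|}=|u_{A,s}(0)|/u_{A,\sigma}(0)$ through the induction, as you do, is the same argument without the normalization.
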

\begin{proof}
  Define $c_m(s) = \frac{u_{A,s}(m)}{u_{A,s}(0)}$.  The sequence
  $(c_m(s))_{m\geq0}$ obeys the same recurrence \eqref{eq:recu} as $(u_m(s))$
  save for the initial term, which is now $1$.  Observe that $\bigl|b^{m+s} - N\bigr|\geq
  b^{m+\sigma}-N$.  As the binomial coefficients and the power sums which
  appear in the recurrence \eqref{eq:recu} are all non-negative, we conclude
  by induction that $|c_m(s)|\leq c_m(\sigma)$ for every $m\geq0$.
\end{proof}

Consider now the following ``restricted'' zeta value, where the $'$ sign indicates
that only those integers are kept, which are $A$-admissible:
\begin{equation}\label{eq:K}
  K_{b,A,s} = \sideset{}{'}\sum_{n>0} \frac1{n^s}\;.
\end{equation}
We have incorporated $b$ into the subscript, it should arguably have been used
earlier also with the measure and the moments.  We choose a \emph{level}
$\ell\geq1$, and consider the contributions from those admissible integers
having at least $\ell$ digits. We gather them according to the
\emph{admissible} word $w$ of their first $\ell$ digits, so that the minimal
representation of $n$ is $wz$ with some word $z$, possibly empty.
\begin{align}
\notag  n &= b^{|z|}\bigl(b^{|w|}x(w)+ x(z)\bigr),\\
\label{eq:startswithw}
\sideset{}{'}\sum_{n\text{ starts with }w} \frac1{n^s}
&=
  \int_{z\in\cW} \bigl(n(w)+ x(z)\bigr)^{-s}\dmu_{A,s}(z).
\end{align}
Caution that here we do not allow $w=\epsilon$, else, we would need to
restrict the integral to those $z$'s not starting with a $0$ (i.e.\@ such that
$x(z)\geq b^{-1}$), which would then indeed give a formula for the full
$K_{b,A,s}$, but we do not use it here.  In the process leading to the formula
\eqref{eq:startswithw} with $s\in \RR$ arbitrary, everything is non-negative
and the re-writing of the Dirichlet series as an integral against a positive
measure $\mu_{A,s}$ is valid for all $s\in\RR$.  As any function $x\mapsto
(a+x)^{-s}$ on $\Ifo01$ with $a>0$ has a positive minimum and is bounded, its
integral against a positive measure is finite if and only if that measure is
finite.  So the restricted harmonic series \eqref{eq:K} converges if and only
if the push-forward of $\mu_{A,s}$ under the map $x:\cW\to\Ifo01$ is a finite
measure, i.e.\@ if and only if $\mu_{A,s}(\cW)<\infty$, i.e.\@ if and only if
$s>\log_b N$.  See \cite{kohlspil2009,nathanson2021jnt,alloshalstip2025} for
prior occurrences of this abscissa of convergence in the literature.

For $\Re(s)>\log_b(N)$ the partial series are thus absolutely convergent and
the formula \eqref{eq:startswithw} is valid.  Throughout the paper from now
on, we assume $\Re(s)>\log_b(N)$.

We appeal to the binomial series and intervert series and integral in \eqref{eq:startswithw}:
\begin{equation}\label{eq:binomial}
  \int_{z\in\cW} \sum_{m=0}^\infty (-1)^m\frac{(s)_m}{m!}\frac{x(z)^m}{n(w)^{m+s}}\dmu_{A,s}(z)
=\sum_{m=0}^\infty (-1)^m\frac{(s)_m}{m!}\frac{u_{A,s}(m)}{n(w)^{m+s}}\;.
\end{equation}
Here, $(s)_m = s(s+1)\dots (s+m-1)$ is the ascending factorial with $m$ terms
(Pochhammer symbol).  We have assumed $n(w)>1$ so that the series depending on
$x(z)$ is uniformly and absolutely convergent on $\cW$ and termwise
integration is justified.  It is asserted in the next proposition that this
interversion is valid also for $n(w)=1$.
\begin{proposition}\label{prop:conv}
  Let $s$ be a complex number
  such that $\Re s >\log_b(N)$.  The series
  \begin{equation*}
    L_{b,A}(s) \coloneq \sum_{m=0}^\infty(-1)^m\frac{(s)_m}{m!}u_{A,s}(m),
  \end{equation*}
  converges and is equal to $\int_{\cW}\bigl(1+x(w)\bigr)^{-s}\dmu_{A,s}(w)$.
  Let
  $\lambda = \frac{\max A}{b-1}\in \Iof01$.
  \begin{itemize}
  \item The remainders are $o(\lambda^{m})$. If $\Re s\geq1$, they are more precisely
    $O(\lambda^m/m)$.
  \item  If $\lambda=1$ and $s$ is real, the series
    is only semi-convergent.
  \end{itemize}
  Multiplying each term with an additional factor $n^{-m-s}$ for $n$ some
  integer $>1$ (or any real number $>1$), we thus obtain a geometrically
  convergent series.  For $s$ real, its partial sums are upper and lower
  bounds depending on parity of number of terms.
\end{proposition}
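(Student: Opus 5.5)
The plan is to control the tail of the binomial expansion of $(1+x)^{-s}$ pointwise on the support of the push-forward of $\mu_{A,s}$, which lies in $\Iff{0}{\lambda}$. Indeed, an admissible word has all digits at most $\max A$, so $x(w)\leq \max A\sum_{k\geq1}b^{-k}=\lambda$.

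For $|x|<1$ write the remainder
\begin{equation*}
R_M(x)=(1+x)^{-s}-\sum_{m<M}(-1)^m\frac{(s)_m}{m!}x^m .
\end{equation*}
\begin{itemize}
\item \emph{Case $\lambda<1$.} The binomial series converges absolutely and uniformly on $\Iff0\lambda$. Since $|(s)_m/m!|=O(m^{\Re s-1})$, we get $|R_M(x)|\leq C\sum_{m\geq M}m^{\Re s-1}\lambda^m=O(M^{\Re s-1}\lambda^M)=o(\lambda'^M)$ for any $\lambda'>\lambda$. Integrating against the finite measure $\mu_{A,\Re s}$ then justifies termwise integration, so the sum equals $\int(1+x(w))^{-s}\dmu_{A,s}$ with remainders of that order.
\item \emph{Case $\lambda=1$.} Uniform convergence on $\Iff01$ fails when $\Re s\leq 1$, so I use an integral form of the remainder instead. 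Taylor's formula with integral remainder gives
\begin{equation*}
R_M(x)=(-1)^M\frac{(s)_M}{(M-1)!}\int_0^x (x-t)^{M-1}(1+t)^{-s-M}\,\dt .
\end{equation*}
For $0\le x\le 1$, substituting $t=x\tau$ and using $(x-t)/(1+t)\leq x(1-\tau)$ bounds this by $|(s)_M|/(M-1)!\cdot x^M\int_0^1(1-\tau)^{M-1}(1+x\tau)^{-\Re s-1}\,\mathrm d\tau\leq |(s)_M|\,x^M/M!$. So $|R_M(x)|\leq C M^{\Re s-1}x^M$ uniformly, and
\begin{equation*}
\Bigl|\int R_M\,\dmu_{A,s}\Bigr|\le CM^{\Re s-1}u_{A,\Re s}(M).
\end{equation*}
\end{itemize}

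It remains to show that $u_{A,\sigma}(M)$ decays fast enough, and this is the main obstacle. We need $u_{A,\sigma}(M)=O(M^{-\sigma})$ to get $O(1/M)$, and mere $o(M^{1-\sigma})$ for $o(1)$. The recurrence \eqref{eq:recu} is awkward here. Instead I would estimate directly: the measure of $\{w: x(w)>1-b^{-k}\}$ for $\lambda=1$ (so $b-1\in A$) consists of words beginning with $k$ digits $b-1$, and has mass $b^{-k\sigma}\mu_{A,\sigma}(\cW)$. Hence
\begin{equation*}
u_{A,\sigma}(M)\le \sum_k (1-b^{-k-1})^M\, b^{-k\sigma}\,C\ll \sum_k e^{-Mb^{-k-1}}b^{-k\sigma}\asymp M^{-\sigma}.
\end{equation*}
This gives remainders $O(M^{\Re s-1}M^{-\Re s})=O(1/M)$ for all $\Re s>\log_bN$ in the case $\lambda=1$; in particular they are $o(1)$, and $O(1/M)$ as claimed when $\Re s\geq1$. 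For $\lambda<1$ the same dyadic-type estimate refines the bound to $O(\lambda^M/M)$ when $\Re s\ge1$.

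For the remaining claims:
\begin{itemize}
\item \emph{Sign structure for real $s$.} All $u_{A,s}(m)>0$ and $(s)_m>0$, and the integral remainder $R_M(x)$ has sign $(-1)^M$. Hence partial sums alternate around the limit. The same holds after multiplying terms by $n^{-m-s}$, which follows from applying the argument to $(n+x)^{-s}$.
\item \emph{Semi-convergence for $\lambda=1$, $s$ real.} The lower bound $u_{A,s}(m)\gg m^{-s}$, from the same cylinder estimate, gives terms $\gg m^{-1}$, so the series is not absolutely convergent.
\item \emph{Geometric convergence with $n>1$.} Here the terms are bounded by $Cm^{\sigma-1}n^{-m}$ directly.
\end{itemize}
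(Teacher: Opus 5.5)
Your argument is correct, but it obtains the key moment estimate by a different route from the paper's.

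Both proofs rest on the same Taylor bound $|R_M(x)|\le |(s)_M|\,x^M/M!$ for $x\ge 0$, which reduces everything to bounding $\frac{|(s)_M|}{M!}u_{A,\sigma}(M)$. The paper applies this bound for every $\lambda$, so your separate uniform-convergence treatment of $\lambda<1$ is unnecessary.

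The paper then controls the moments with three separate tools:
\begin{itemize}
\item Lemma~\ref{lem:geo} (dominated convergence, $u_{A,\sigma}(m)=o(\lambda^m)$), which yields only $o(\lambda^m)$ remainders when $\sigma<1$;
\item Proposition~\ref{prop:upper}, an induction on the recurrence \eqref{eq:recu*} via Lemma~\ref{lem:calculus}, which gives $u^*_{A,\sigma}(m)\le \lambda^m b^\sigma/(b^\sigma-N)$ but needs $\sigma\ge1$;
\item Proposition~\ref{prop:lower} (comparison with the alphabet $\{f\}$ and a Beta integral), for the semi-convergence claim.
\end{itemize}

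Your cylinder estimate replaces all three at once. With $f=\max A$, the admissible words with $x(w)\ge\lambda(1-b^{-k})$ are exactly those beginning with $f^k$, and they have mass $b^{-k\sigma}\mu_{A,\sigma}(\cW)$. Hence $u_{A,\sigma}(m)\asymp\lambda^m m^{-\sigma}$ for every $\sigma>\log_b N$, from above and below.

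For $\lambda<1$ you should write out why this rescaled statement holds, rather than calling it ``the same'' estimate. The reason is short: if the first digit different from $f$ sits at position $i\le k$, then
$x(w)<\lambda(1-b^{-i})-(1-\lambda)b^{-i}\le\lambda(1-b^{-i})$, which is where $\lambda\le 1$ enters.

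Your route is more elementary, and it is actually stronger than the statement. It gives $O(\lambda^m/m)$ remainders for all $\Re s>\log_b N$, so the threshold $\Re s\ge 1$ in the proposition is an artifact of Lemma~\ref{lem:calculus}.

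What the paper's recurrence approach buys is explicit constants, uniform in $m$: $\lambda^m/(b^\sigma-N)<u^*_{A,\sigma}(m)\le\lambda^m b^\sigma/(b^\sigma-N)$. These are reused for the a priori bounds of Theorem~\ref{thm:conclusion}. Your constants, coming from $\sum_k(1-b^{-k-1})^m b^{-k\sigma}$, are less clean.

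A minor slip, which your argument does not use: the binomial series fails to converge uniformly on $[0,1]$ for $\Re s\ge1$, not for $\Re s\le1$. For $\Re s<1$ it converges at $x=1$, and Abel's theorem then gives uniform convergence.
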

\begin{proof}
  In this proof, we appeal to estimates which will be established only later.
  Let us start first with a Taylor expansion with integral remainder for the
  function $(n+x)^{-s}$ where $n>0$ is for now some arbitrary positive real
  number, $s$ is a complex number, and $x\geq 0$.  It reads:
\begin{equation*}
  (n + x)^{-s} - \sum_{k=0}^{m-1} (-1)^k\frac{(s)_k}{k!n^s} \frac{x^k}{n^k}
  = (-1)^m\frac{(s)_m}{(m-1)!}\int_{0}^x (x-t)^{m-1}(n+t)^{-s-m}\dt.
\end{equation*}
We observe in passing that, if $s$ is a positive real number, the sign of the
left-hand side has thus to be $(-1)^m$, except of course for $x=0$. Let now $s$
again be some arbitrary complex number, with real part $\sigma=\Re s$.  We get:
\begin{equation*}
  \left| (n + x)^{-s} - \sum_{k=0}^{m-1} (-1)^k\frac{(s)_k}{k!n^s} \frac{x^k}{n^k} \right|
  \leq  \frac{|(s)_m|}{(m-1)!n^{\sigma+m}}\int_{0}^x (x-t)^{m-1}(1+n^{-1}t)^{-\sigma-m}\dt.
\end{equation*}
Suppose that $\sigma+m\geq0$.  Then (and this is surely a standard calculus exercise):
\begin{equation*}
  \left| (n + x)^{-s} - \sum_{k=0}^{m-1} (-1)^k\frac{(s)_k}{k!n^s} \frac{x^k}{n^k} \right|
  \leq  \frac{|(s)_m|x^m}{m! n^{\sigma+m}}\;.
\end{equation*}
We assume from now on that $\sigma=\Re s > \log_b N$. So in particular the
hypothesis $\sigma+m\geq0$ is definitely true, even $\sigma+m>0$. We
apply the above upper bound after replacing $x$ by the function $x(w)$ on
$\cW$. For now, we still do not suppose $n$ to be an integer, only $n>0$.
\begin{align*}
\textrm{\llap{Let }}  \delta_m(s,n) &\coloneq
 \left| \int_{z\in\cW} \bigl(n + x(z)\bigr)^{-s}\dmu_{A,s}(z)
 - \sum_{k=0}^{m-1} (-1)^k\frac{(s)_k}{k!n^{s+k}} u_{A,s}(k)
   \right|
\\
&=
 \left| \int_{z\in\cW} \biggl(\bigl(n + x(z)\bigr)^{-s}
 - \sum_{k=0}^{m-1} (-1)^k\frac{(s)_k}{k!n^{s+k}} x(z)^k\biggr)
 \dmu_{A,s}(z)  \right|
\\
&\leq
 \int_{z\in\cW} \left| \bigl(n + x(z)\bigr)^{-s}
 - \sum_{k=0}^{m-1} (-1)^k\frac{(s)_k}{k!n^{s+k}} x(z)^k
 \right| \dmu_{A,\sigma}(z)
\\
&\leq
 \int_{z\in\cW} \frac{|(s)_m|x(z)^m}{m!n^{\sigma + m}}\dmu_{A,\sigma}(z)
=\frac{|(s)_m|}{m!n^{\sigma + m}}u_{A,\sigma}(m).
\end{align*}
Recall that, for any complex $z$, there holds $(z)_k/k!\sim_{k\to\infty}
\Gamma(z)^{-1}k^{z-1}$ (with $0\sim 0$ if $z\in-\NN$).
Thus, for $\sigma=\Re z\neq 0,-1,-2, \dots$, the
sequence $k\mapsto \frac{|(z)_k|}{|(\sigma)_k|}$, which is non-decreasing, has
limit equal to $\frac{|\Gamma(\sigma)|}{|\Gamma(z)|}$, hence, it is bounded
above by this limiting value.  In particular, for our $s$ (which has
$\sigma=\Re s >0$), and any real number $n>0$, we have the upper bound:
\begin{equation*}
  \delta_m(s,n) \leq
\frac{|(s)_m|}{(\sigma)_m}\frac{(\sigma)_mu_{A,\sigma}(m)}{m!n^{\sigma+m}}
\leq
  \frac{\Gamma(\sigma)}{|\Gamma(s)|}
  \frac{(\sigma)_m}{m!}\frac{u_{A,\sigma}(m)}{n^{\sigma+m}}
  \sim_{m\to\infty}
  \frac{m^{\sigma-1}}{|\Gamma(s)|}\frac{u_{A,\sigma}(m)}{n^{\sigma+m}}\;.
\end{equation*}
Further, we know that $u_{A,\sigma}(m)$ is $O(1)$, even that it decreases to
zero.  Thus, the quantity $\delta_m(s,n)$ is $o(m^{\sigma-1}n^{-m})$
(we will obtain a better bound for $\sigma\geq1$ below).

For $n>1$, we thus have $\delta_m(s,n)=o(c^m)$ for any $c>n^{-1}$. This gives
a more detailed justification for the step from \eqref{eq:startswithw} to
\eqref{eq:binomial}.

Let us now assume $n=1$.  The quantity $\delta_m(s,1)$ is the distance from
the $m$-th partial sum of $L_{b,A}(s)$ (i.e. keeping terms with indices $<m$)
to the integral $\int_{z\in\cW} \bigl(1 + x(z)\bigr)^{-s}\dmu_{A,s}(z)$.  We
have obtained
$
  \delta_m(s,1)\leq   \frac{\Gamma(\sigma)}{|\Gamma(s)|}
  \frac{(\sigma)_m}{m!} u_{A,\sigma}(m)
$.

Suppose first that $\sigma<1$. Thus $\delta_m(s,1)=o(1)$. More precisely, from
next Lemma \ref{lem:geo},
$\delta_m(s,1)=o(m^{\sigma-1}\lambda^m)=o(\lambda^m)$ where $\lambda=\max
A/(b-1)$.  This establishes the convergence and the value of $L_{b,A}(s)$.

Take now $\sigma\geq1$.  From Proposition \ref{prop:upper}, which is stated
later, we deduce that $\delta_m(s,1)$ is $O(\lambda^m m^{-1})$ and
consequently is $o(\lambda^m)$.

Finally, assuming $s=\sigma$ is real, we have from Proposition
\ref{prop:lower}, also stated later, the lower bound
$\frac{(\sigma)_m}{m!}u_{A,\sigma}(m)>
 \frac{\sigma}{\sigma+m}\lambda^m/(b^\sigma-N)$.
Hence, if $\lambda=1$ the series defining $L_{b,A}(\sigma)$ is only
semi-convergent.
\end{proof}
We shall now establish the various estimates used in the
previous proof.  We use the notation $f = \max A$. So $f \leq b-1$.
First, a result applying also to complex $s$: 
\begin{lemma}\label{lem:geo}
  For any $s$ with $\Re s>\log_b(N)$,
  the sequence $(\Bigl(\frac{b-1}{f}\Bigr)^m u_{A,s}(m))_{m\geq0}$ has limit zero.
\end{lemma}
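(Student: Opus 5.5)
Since $|u_{A,s}(m)|\leq u_{A,\sigma}(m)$ with $\sigma=\Re s$ (the trivial estimate, or Proposition \ref{prop:bound}), it suffices to treat real $s=\sigma>\log_b N$ and the positive measure $\mu_{A,\sigma}$. We have
\begin{equation*}
  \Bigl(\frac{b-1}{f}\Bigr)^m u_{A,\sigma}(m)=\int_{\cW}\Bigl(\frac{(b-1)x(w)}{f}\Bigr)^m\dmu_{A,\sigma}(w),
\end{equation*}
and the plan is to apply dominated convergence to this integral.

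The key observation concerns admissible words $w=d_1\dots d_l$, whose digits are all at most $f$. For these,
\begin{equation*}
  x(w)=\sum_{k=1}^l d_k b^{-k}\leq f\sum_{k=1}^l b^{-k}=\frac{f}{b-1}\bigl(1-b^{-l}\bigr)<\frac{f}{b-1}.
\end{equation*}
Hence on the support of $\mu_{A,\sigma}$ (the admissible words) the function $y(w)=(b-1)x(w)/f$ satisfies $0\leq y(w)<1$. So $y(w)^m\to0$ pointwise for each admissible word, and $0\leq y(w)^m\leq 1$ uniformly in $m$.

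Since $\mu_{A,\sigma}$ is a finite positive measure (its total mass is $b^\sigma/(b^\sigma-N)$ by \eqref{eq:u0}), the constant $1$ is integrable. Dominated convergence then gives that the integral tends to $0$, which proves the lemma. Equivalently, since $\cW$ is countable, this is termwise convergence of a series dominated by the summable weights $b^{-\sigma\ell(w)}$.

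There is no real obstacle here. The only point needing care is the strict inequality $x(w)<f/(b-1)$: for each fixed word the ratio is strictly less than $1$, even though the supremum over all admissible words equals $1$. This is exactly why one gets $o$ and not a geometric rate. The complex case is covered immediately by the bound $|u_{A,s}(m)|\leq u_{A,\sigma}(m)$, or directly, since the measure of variations of $\mu_{A,s}$ is $\mu_{A,\sigma}$.
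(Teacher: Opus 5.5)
Your proof is correct and follows the paper's argument essentially verbatim. You reduce to real $s$ via $|u_{A,s}(m)|\leq u_{A,\Re s}(m)$, rescale to $y(w)=\frac{b-1}{f}x(w)$, note $0\leq y(w)<1$ on admissible words (the only ones carrying mass), and conclude by dominated convergence against the finite measure $\mu_{A,\sigma}$; your explicit bound $x(w)\leq \frac{f}{b-1}\bigl(1-b^{-\ell(w)}\bigr)$ simply writes out a step the paper states without computation.
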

\begin{proof}
  Due to $|u_{A,s}(m)|\leq u_{A,\Re s}(m)$, we can assume $s$ to be real.  Define
  the rescaled function $y(w)= \frac{b-1}{f} x(w)$.
  Thus, $0\leq y(w)< 1$ for every \emph{admissible} $w\in \cW$. The
  non-admissible $w$ may have $y(w)\geq1$ but they live in a set of zero
  measure.  By the Lebesgue dominated convergence theorem, the integral against
  $\mu_{A,s}$ of the powers of the function $y(w)$ go to zero (and as here $s$ is real,
  we can even say that the rescaled moments decrease to zero).
\end{proof}

Now for lower and upper bounds with $s=\sigma$ real.
\begin{proposition}\label{prop:lower}
  Let $m\geq0$.
  For $\sigma>\log_b(N)$ there holds (with $f=\max A$):
  \begin{equation*}
    u_{A,\sigma}(m)> \frac \sigma{m+\sigma}\frac{m!}{(\sigma)_m}\frac{(f/(b-1))^m}{b^\sigma - N}\;.
  \end{equation*}
\end{proposition}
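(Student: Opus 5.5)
The plan is to bound $u_{A,\sigma}(m)$ from below by keeping only part of the integral, namely the words that begin with a run of the maximal digit $f=\max A$. First I rewrite the target. Since $\sigma(\sigma)_m^{-1}(m+\sigma)^{-1}=1/(\sigma+1)_m$, the right-hand side equals
\begin{equation*}
\frac{m!}{(\sigma+1)_m}\frac{\lambda^m}{b^\sigma-N}
=\frac{\lambda^m}{b^\sigma-N}\int_0^1(1-u)^m\,\sigma u^{\sigma-1}\,\du ,
\qquad \lambda=\frac f{b-1},
\end{equation*}
a Beta integral. So it suffices to show $u_{A,\sigma}(m)$ exceeds $\lambda^m/(b^\sigma-N)$ times this integral. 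The case $m=0$ is immediate, because $u_{A,\sigma}(0)=b^\sigma/(b^\sigma-N)>1/(b^\sigma-N)$. From now on let $m\geq1$.

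\emph{Step 1 (decomposition).} Every admissible word can be written uniquely as $f^kz$, where $k\geq0$ and $z$ is an admissible word that is either empty or does not start with $f$. The measure factors, $\mu_{A,\sigma}(\{f^kz\})=b^{-\sigma k}\mu_{A,\sigma}(\{z\})$. Moreover
\begin{equation*}
x(f^kz)=\lambda(1-b^{-k})+b^{-k}x(z)\geq\lambda(1-b^{-k}).
\end{equation*}
The total $\mu_{A,\sigma}$-mass of the allowed suffixes $z$ is $1+(N-1)b^{-\sigma}\cdot\frac{b^\sigma}{b^\sigma-N}=\frac{b^\sigma-1}{b^\sigma-N}$, using \eqref{eq:u0}. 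Summing the lower bound for $x^m$ over $k$ and $z$ therefore gives
\begin{equation*}
u_{A,\sigma}(m)\geq\frac{\lambda^m}{b^\sigma-N}\sum_{k\geq1}(b^\sigma-1)b^{-\sigma k}(1-b^{-k})^m .
\end{equation*}
Here the $k=0$ term vanishes because $m\geq1$.

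\emph{Step 2 (comparison with the integral).} For $k\geq1$,
\begin{equation*}
(b^\sigma-1)b^{-\sigma k}=\int_{b^{-k}}^{b^{-k+1}}\sigma u^{\sigma-1}\,\du .
\end{equation*}
On the interval $[b^{-k},b^{-k+1}]$ we have $(1-u)^m\leq(1-b^{-k})^m$, with strict inequality on a set of positive length since $m\geq1$. The intervals $[b^{-k},b^{-k+1}]$ for $k\geq1$ tile $\Iof01$. Hence the sum is strictly larger than $\int_0^1(1-u)^m\sigma u^{\sigma-1}\du$, which is exactly what was needed.

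The only delicate point is choosing the right shift in the interval comparison, so that the monotonicity of $(1-u)^m$ goes in the favourable direction. Using $[b^{-k-1},b^{-k}]$ instead would give the reverse inequality. Note that the constant $b^\sigma-1$ coming from the suffix mass is exactly what makes the weights match these intervals. No other obstacle is expected.
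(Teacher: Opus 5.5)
Your proof is correct. Your second step is the paper's own Riemann-sum comparison up to the change of variable $t=u^\sigma$: the paper uses the decreasing function $g(t)=(1-t^{1/\sigma})^m$ with nodes $\xi_j=b^{-\sigma j}$, which is your tiling of $\Iof01$ by $[b^{-k},b^{-k+1}]$ with weight $\sigma u^{\sigma-1}$.

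Where you differ is in how you reach the common intermediate bound. For $m\geq1$ this bound reads $u_{A,\sigma}(m)\geq \frac{b^\sigma-1}{b^\sigma-N}\,\lambda^m\sum_{k\geq1}b^{-\sigma k}(1-b^{-k})^m$, that is, $u_{A,\sigma}(m)/u_{A,\sigma}(0)\geq u_{\{f\},\sigma}(m)/u_{\{f\},\sigma}(0)$.

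The paper gets it from the recurrence \eqref{eq:recu} for the normalized moments. All binomial coefficients and power sums are non-negative, so replacing $b^{m+\sigma}-N$ by $b^{m+\sigma}-1$ and $\sum_{a\in A}a^j$ by $f^j$ can only decrease the solution. The resulting recurrence is then recognized as the one for the singleton alphabet $\{f\}$, whose moments are explicit because $x(f^j)=\lambda(1-b^{-j})$.

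You obtain the same inequality directly on the measure. You cut each admissible word after its leading run of $f$'s, discard $b^{-k}x(z)\geq0$, and compute the mass of the admissible suffixes that do not start with $f$.

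Your route needs no induction and makes the constant transparent: $(b^\sigma-1)/(b^\sigma-N)$ is exactly the mass of the words not beginning with $f$, so one sees which words carry the bound and what is thrown away. The paper's route uses only the recurrence, and it fits the comparison-by-induction method the paper also uses for Propositions~\ref{prop:bound} and~\ref{prop:upper}.

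One small point you should make explicit: the strict inequality needs $\lambda^m>0$, i.e.\ $f\geq1$. This is the standing assumption that $A$ is not reduced to $\{0\}$.
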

\begin{proof}
  Let us temporarily set $v_{A,\sigma}(m) = u_{A,\sigma}(m)/u_{A,\sigma}(0)$.  They obey the
  same recurrence \eqref{eq:recu} as the $u_{A,\sigma}(m)$:
\begin{equation*}
  (b^{m+\sigma} - N) v_{A,\sigma}(m) = \sum_{j=1}^{m} \binom{m}{j}(\sum_{a\in A}a^j) v_{A,\sigma}(m-j).
\end{equation*}
We can only decrease these quantities if we replace in this recurrence
$b^{m+\sigma}-N$ by its upper bound $b^{m+\sigma}-1$ in the left-hand side, and
$\sum_{a\in A} a^j$ by its lower bound $f^j$, where $f = \max A$.  So
$v_{A,\sigma}(m)\geq w_{A,\sigma}(m)$ where $w_{A,\sigma}(0) = 1$ and for $m\geq1$:
\begin{equation*}
  (b^{m+\sigma} - 1) w_{A,\sigma}(m) = \sum_{j=1}^{m} \binom{m}{j}f^j w_{A,\sigma}(m-j).
\end{equation*}
But this means that $w_{A,\sigma}(m) = u_{A',\sigma}(m)/u_{A',\sigma}(0)$ where $A'$ is the
singleton $\{f\}$. For the $A'$-admissible words $f^j$, one has $x(f^j)=
(f/(b-1))(1 - b^{-j})$. So:
\begin{equation*}
  u_{A',\sigma}(m) = f^m(b-1)^{-m}\sum_{j=0}^\infty b^{-\sigma j}(1 - b^{-j})^m.
\end{equation*}
Now, consider the function $t\mapsto g(t) = (1 - t^{\sigma^{-1}})^m$ on $\Iff01$,
and assume $m>0$.  This function is strictly decreasing.  Considering the
evaluation points $\xi_j=b^{-\sigma j}$, $j\geq1$, we obtain:
\begin{equation*}
  \int_0^1  (1 - t^{\sigma^{-1}})^m\dt < g(\xi_1)(1-\xi_1) + g(\xi_2)(\xi_1 - \xi_2) + \dots
  = \sum_{j=1}^\infty (1 - b^{-j})^mb^{-\sigma j}(b^{\sigma} - 1).
\end{equation*}
So, putting everything together, we obtain, for $\sigma>\log_b(N)\geq0$, $m\geq1$:
\begin{equation*}
  u_{A,\sigma}(m)> \frac{b^\sigma}{b^\sigma -N}\frac{b^\sigma -1}{b^\sigma}
                  \frac {f^m}{(b-1)^m}\frac{\int_0^1 (1 - t^{\sigma^{-1}})^m\dt}{b^\sigma -1}
  = \frac \sigma{m+\sigma}\frac{m!}{(\sigma)_m}\frac{(f/(b-1))^m}{b^\sigma - N}\;.
\end{equation*}
We used the well-known formula for the eulerian beta integral. This final result holds
also for $m=0$ and completes the proof of the lower bound.
\end{proof}

In view of the previous inequality we now define, for $m\geq0$ (and complex $s$):
\begin{equation}\label{eq:defu*}
   u_{A,s}^*(m) =\frac{(s+1)_m}{m!} u_{A,s}(m) = \frac{s+m}{s}\frac{(s)_m}{m!} u_{A,s}(m),
\end{equation}
where again $(s+1)_m$ is an \emph{ascending} partial factorial.
With these quantities, \eqref{eq:recu} becomes
\begin{equation}\label{eq:recu*}
 (b^{m+s} -N) u_{A,s}^*(m) = \sum_{j=1}^{m} \frac{(s+m)\dots(s+m-j+1)}{j!}
                                          (\sum_{a\in A}a^j) u_{A,s}^*(m-j).
\end{equation}

For the upper bound we will need the following lemma from calculus:
\begin{lemma}\label{lem:calculus}
  Let $\sigma\geq1$ be a real number, $m$ a non-negative integer, and $a$ a
  non-negative real number.  Then
\begin{equation*}
  \sum_{j=1}^m \frac{(\sigma+m)\dots(\sigma+m-j+1)}{j!} a^j \leq (1+ a)^{\sigma+m}- 1 - a^{\sigma+m}\,.
\end{equation*}
\end{lemma}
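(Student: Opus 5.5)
The plan is to read the left-hand side as a truncated binomial expansion of $(1+a)^{x}$ with $x=\sigma+m$, and to show that the Taylor remainder of order $m$ is at least $a^{x}$. For $1\leq j\leq m+1$ the coefficient $\binom{x}{j}=\frac{x(x-1)\dots(x-j+1)}{j!}$ is non-negative, since each factor $x-i$ with $i\leq m$ is at least $\sigma>0$. So the sum in the statement is exactly $\sum_{j=0}^m\binom{x}{j}a^j-1$. The inequality is then equivalent to
\begin{equation*}
  R_m(a)\coloneq(1+a)^{x}-\sum_{j=0}^{m}\binom{x}{j}a^j\;\geq\; a^{x}.
\end{equation*}

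First I write Taylor's formula with integral remainder for $t\mapsto(1+t)^{x}$ on $\Iff0a$, as in the proof of Proposition \ref{prop:conv}:
\begin{equation*}
  R_m(a)=(m+1)\binom{x}{m+1}\int_0^a (a-t)^m(1+t)^{x-m-1}\dt
        =(m+1)\binom{x}{m+1}\int_0^a (a-t)^m(1+t)^{\sigma-1}\dt .
\end{equation*}
The prefactor is non-negative by the remark above. Since $\sigma\geq1$, the exponent $\sigma-1$ is non-negative, so $(1+t)^{\sigma-1}\geq t^{\sigma-1}$ for $t\geq 0$. This is the only place where the hypothesis $\sigma\geq1$ enters, and it is the key step. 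It gives
\begin{equation*}
  R_m(a)\geq (m+1)\binom{x}{m+1}\int_0^a (a-t)^m t^{\sigma-1}\dt
  =(m+1)\binom{x}{m+1}\,a^{m+\sigma}\,\frac{m!\,\Gamma(\sigma)}{\Gamma(m+\sigma+1)},
\end{equation*}
where the integral is evaluated by the substitution $t=au$ and the Eulerian beta integral.

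Finally I check that the constant equals $1$. We have $\binom{x}{m+1}=\frac{\Gamma(x+1)}{(m+1)!\,\Gamma(x-m)}=\frac{\Gamma(\sigma+m+1)}{(m+1)!\,\Gamma(\sigma)}$, and multiplying by $(m+1)\,m!\,\Gamma(\sigma)/\Gamma(m+\sigma+1)$ gives exactly $1$. Hence $R_m(a)\geq a^{\sigma+m}$, which is the claim.

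The case $m=0$ is covered by the same argument: the sum is empty, and the claim reduces to $(1+a)^\sigma\geq 1+a^\sigma$. For $\sigma=1$ the comparison $(1+t)^{0}=t^{0}$ is an equality, so the inequality is sharp there. I expect no real obstacle. The only point needing care is confirming that the remainder prefactor has the right sign, which holds because $x-m=\sigma>0$.
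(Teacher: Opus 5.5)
Your proposal is correct and follows essentially the same route as the paper: Taylor's formula with integral remainder to order $m$ for $(1+a)^{\sigma+m}$, the comparison $(1+t)^{\sigma-1}\geq t^{\sigma-1}$ (the only use of $\sigma\geq1$), and evaluation by the Beta integral, where the paper equivalently observes that the result is the remainder integral for $a^{\sigma+m}$. Your explicit check that the constant equals $1$ and your remark on the sign of the prefactor fill in details the paper leaves implicit.
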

\begin{proof}
  Of course the left-hand side is defined to be zero if $m=0$. The proof can
  be done by induction on $m$, or, one can apply the Taylor formula with
  integral remainder for the expansion of $(1+a)^{\sigma+m}$ in ascending powers of
  $a$ to $m$-th order (i.e.\@ with the remainder involving the $(m+1)$-th
  derivative). This expansion gives $1$, plus the above sum on left-hand side, plus:
  \begin{equation*}
    \int_0^a \frac{(a-t)^m}{m!}(\sigma+m)\dots (\sigma+1)\sigma(1 + t)^{\sigma-1}\dt.
  \end{equation*}
  As $\sigma\geq1$, we can use $(1+t)^{\sigma-1}\geq t^{\sigma-1}$ in this integral.  Then we
  conclude via a Beta integral formula or we recognize that we have now the
  remainder integral for the evaluation of $(0 + a)^{\sigma+m}$ in ascending powers
  of $a$ up to order $m$.  So this lower bound gives $a^{\sigma+m}$ and moving
  $a^{\sigma+m}$ and $1$ to the other other side of the inequality we obtain the
  result.
\end{proof}
\begin{proposition}\label{prop:upper}
  Let $\sigma\geq1$ ($\sigma>1$ if $N=b$). Let $f=\max A$. There holds, for every $m\geq0$:
  \begin{equation*}
    \frac {\sigma+m}\sigma\frac{(\sigma)_m}{m!} u_{A,\sigma}(m) 
   = u_{A,\sigma}^*(m)\leq  \frac{f^m}{(b-1)^m}\frac{b^\sigma}{b^\sigma - N}\;.
  \end{equation*}
\end{proposition}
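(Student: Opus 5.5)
Set $\lambda=f/(b-1)$, $C=\frac{b^\sigma}{b^\sigma-N}=u_{A,\sigma}(0)=u^*_{A,\sigma}(0)$, and argue by strong induction on $m$ using the starred recurrence \eqref{eq:recu*}. The case $m=0$ is an equality. For $m\geq1$, assume $u^*_{A,\sigma}(m-j)\leq \lambda^{m-j}C$ for $1\leq j\leq m$. All coefficients in \eqref{eq:recu*} are non-negative for real $\sigma>0$, so
\begin{equation*}
  (b^{m+\sigma}-N)\,u^*_{A,\sigma}(m)\leq C\lambda^m\sum_{a\in A}\sum_{j=1}^m\frac{(\sigma+m)\dots(\sigma+m-j+1)}{j!}\Bigl(\frac{a}{\lambda}\Bigr)^j .
\end{equation*}
It then suffices to show that the double sum is at most $b^{m+\sigma}-N$.

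For the inner sum, apply Lemma \ref{lem:calculus} (this is where $\sigma\geq1$ is used) with the non-negative real $a/\lambda=a(b-1)/f$. This bounds it by $(1+a(b-1)/f)^{\sigma+m}-1-(a(b-1)/f)^{\sigma+m}$. Summing over $a\in A$, we need
\begin{equation*}
  \sum_{a\in A}\Bigl[\Bigl(1+\tfrac{a(b-1)}{f}\Bigr)^{\sigma+m}-\Bigl(\tfrac{a(b-1)}{f}\Bigr)^{\sigma+m}\Bigr]\leq b^{m+\sigma}.
\end{equation*}
Indeed, the $-1$ terms contribute exactly $-N$, which matches the $-N$ on the left side of the recurrence.

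This last inequality is the main obstacle. Write $p=\sigma+m\geq1$ and $c=(b-1)/f\geq1$. The function $\phi(y)=(1+y)^p-y^p$ is non-decreasing in $y\geq0$ when $p\geq1$. The points $ca$ for $a\in A$ are not the integers $0,\dots,b-1$, so one cannot simply telescope. The plan is to compare with the telescoping sum $\sum_{k=0}^{b-1}\bigl((k+1)^p-k^p\bigr)=b^p$.

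To make the comparison, first note that $\phi(y)=\int_y^{y+1}pt^{p-1}\,\dt$ is the integral of a non-decreasing function over a unit window. List the admissible digits as $a_1<\dots<a_N=f$ and set $y_i=ca_i$. Distinct digits differ by at least $1$, so $y_{i+1}-y_i\geq c\geq1$. Also $y_N=b-1$, so the windows $[y_i,y_i+1]$ are pairwise disjoint unit intervals inside $[0,b]$. Hence
\begin{equation*}
  \sum_i\phi(y_i)\leq\int_0^b pt^{p-1}\,\dt=b^p,
\end{equation*}
which is exactly what is needed. The scaling by $c$ only pushes the windows to the right, toward where the integrand is larger, and they still fit inside $[0,b]$ because the top one ends at $b$.

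Dividing by $b^{m+\sigma}-N>0$ then closes the induction. This positivity holds because $m\geq1$ and $N\leq b$; at $m=0$ it needs $\sigma>\log_b N$, which is why the hypothesis requires $\sigma>1$ when $N=b$. The displayed equality with $\frac{\sigma+m}{\sigma}\frac{(\sigma)_m}{m!}$ is just the definition \eqref{eq:defu*}.
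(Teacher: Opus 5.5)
Your proof is correct and follows the paper's argument: strong induction on the starred recurrence \eqref{eq:recu*}, Lemma~\ref{lem:calculus} applied digit by digit, and then the bound $\sum_{a\in A}\bigl[(1+ca)^{p}-(ca)^{p}\bigr]\leq b^{p}$ with $c=(b-1)/f$ and $p=\sigma+m$. The only difference is that last step: the paper first replaces the nonzero digits by the consecutive block $f-N_1+1,\dots,f$ (which maximizes the power sums), then telescopes using $ca\geq c(a-1)+1$ to get $b^{p}-N_1-1\leq b^{p}-N$, whereas your comparison of disjoint unit windows with $\int_0^b pt^{p-1}\,\dt$ works directly on $A$ and avoids both the rearrangement and the $N_1$ versus $N$ bookkeeping.
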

\begin{proof}
  Let $c=u_{A,\sigma}^*(0)$.  Let $m\geq1$ and suppose by induction hypothesis
  $u_{A,\sigma}^*(m-j)\leq (f/(b-1))^{m-j}c$ for $1\leq j\leq m$.  The power sums
  $\sum_{a\in A} a^j$ with $j\geq 1$ are maximal when the $N$ admissible
  digits are $f$, $f-1$, \dots, $f-N+1$. In fact, we can stop with $f-N_1+1$,
  because if $N_1<N$, this means that $A$ contains the digit $0$, and the
  latter does not contribute to the power sums anyhow.  So, from
  \eqref{eq:recu*}:
\begin{align*}
  (b^{m+\sigma} -N) &u_{A,\sigma}^*(m) \leq c \sum_{j=1}^{m}
  \frac{(\sigma+m)\dots(\sigma+m-j+1)}{j!}
  \Bigl(\frac{f}{b-1}\Bigr)^{m-j}\sum_{f-N_1< a \leq f} a^j
  \\
&\leq  \Bigl(\frac{f}{b-1}\Bigr)^m c\sum_{j=1}^{m} \frac{(\sigma+m)\dots(\sigma+m-j+1)}{j!}
    \sum_{f-N_1< a \leq f} \Bigl(\frac{(b-1)a}{f}\Bigr)^j\,.
\end{align*}
By hypothesis, $\sigma\geq1$, so, using Lemma \ref{lem:calculus}, we obtain:
\begin{equation}\label{eq:foo}
  \begin{aligned}[b]
    (b^{m+\sigma} -N) &u_{A,\sigma}^*(m) 
    \\
    &\leq \Bigl(\frac{f}{b-1}\Bigr)^m
              c\sum_{f-N_1< a \leq f}
    \Bigl((\frac{b-1}{f}a+1)^{\sigma+m} - 1 - \bigl(\frac{b-1}{f}a\bigr)^{\sigma+m} \Bigr).
  \end{aligned}
\end{equation}
As the
smallest used $a$ is at least $1$, there holds:
\begin{equation*}
\frac{b-1}{f}a \geq \frac{b-1}{f}(a-1) + 1 \geq 1.
\end{equation*}
We can thus, as $\sigma+m\geq0$ (even $\sigma+m\geq2$), replace \eqref{eq:foo} by a
weaker upper bound which has the advantage of being telescopic. After summing
the contributions, we replace the last term $-(\frac{b-1}{f}(f-N_1)+1)^{\sigma+m}$
by its upper bound $-1$.  Hence,
\begin{equation*}
  (b^{m+\sigma} -N) u_{A,\sigma}^*(m)
\leq \Bigl(\frac{f}{b-1}\Bigr)^m \bigl(b^{\sigma+m} - N_1 - 1\bigr)c
\leq  \Bigl(\frac{f}{b-1}\Bigr)^m \bigl(b^{\sigma+m} - N\bigr)c.
\end{equation*}
Hence, the validity for $u_{A,\sigma}^*(m)$ of the induction hypothesis is
established and the proof is complete.
\end{proof}

Having by now completely justified the Proposition \ref{prop:conv} we return
to the discusion, initiated with equations \eqref{eq:startswithw} and
\eqref{eq:binomial}, of the restricted Dirichlet series.
\begin{theorem}\label{thm:main}
  Let $\ell\geq1$. There holds:
  \begin{equation*}
    \begin{aligned}[t]
      K_{b,A,s} = &\sideset{}{'_{0<n<b^{\ell-1}}}\sum \frac1{n^s}
      +  \frac{b^{s}}{b^{s} -N}\sideset{}{'_{b^{\ell-1}\leq n<b^{\ell}}}\sum \frac1{n^s}\\
      &+ \sum_{m=1}^\infty (-1)^m \frac{(s)_{m}}{m!}u_{A,s}(m)
              \sideset{}{'_{b^{\ell-1}\leq n<b^{\ell}}}\sum \frac1{n^{m+s}}\;.
    \end{aligned}
  \end{equation*}
  The partial sums provide, for $s$ real, lower and upper bounds depending on
  the parity of number of terms. For complex $s$, if either $\ell>1$, or
  $\ell=1$ and $1\notin A$, or $\ell=1$ and $b-1\notin A$, the series is
  geometrically convergent.
\end{theorem}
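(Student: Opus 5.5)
Split the sum defining $K_{b,A,s}$ into the admissible integers with fewer than $\ell$ digits, which give the first finite sum, and those with at least $\ell$ digits. Group the latter by their admissible leading word $w$ of length $\ell$. Since the first digit of $n$ is nonzero, $n(w)$ runs exactly over the admissible integers in $[b^{\ell-1},b^\ell)$, and $n(w)\ge 1$. By absolute convergence ($\Re s>\log_b N$) we may regroup, and \eqref{eq:startswithw} gives
\begin{equation*}
\sideset{}{'}\sum_{n\geq b^{\ell-1}}\frac1{n^s}=\sideset{}{'_{b^{\ell-1}\leq n<b^{\ell}}}\sum\int_{\cW}\bigl(n+x(z)\bigr)^{-s}\dmu_{A,s}(z).
\end{equation*}
For each such $n$ we expand the integral with \eqref{eq:binomial}. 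When $n>1$ this is justified by the bound $\delta_m(s,n)=o(c^m)$, $c>1/n$, from the proof of Proposition \ref{prop:conv}. When $n=1$ (only if $\ell=1$ and $1\in A$), Proposition \ref{prop:conv} itself gives convergence to the integral. The $m=0$ term is $u_{A,s}(0)n^{-s}=\frac{b^s}{b^s-N}n^{-s}$ by \eqref{eq:u0}. Summing finitely many convergent series in $m$ term by term gives the identity of the theorem.

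For the bracketing property with $s$ real, I would use the Taylor remainder from the proof of Proposition \ref{prop:conv}. For $s>0$ real, $x\ge0$ and each $m$, the difference $(n+x)^{-s}-\sum_{k<m}(\cdots)$ has sign $(-1)^m$ (or is zero). Integrating against the positive measure $\mu_{A,s}$ and summing over the finitely many $n$ shows that $K_{b,A,s}$ minus the partial sum with $m$ terms of the full expression has sign $(-1)^m$. So the partial sums alternate as lower and upper bounds. Here real $s>\log_b N\ge0$ ensures $s>0$.

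For geometric convergence with complex $s$, I would bound the $m$-th term by $\frac{|(s)_m|}{m!}u_{A,\sigma}(m)\sum' n^{-\sigma-m}$, using $|u_{A,s}(m)|\le u_{A,\sigma}(m)$. The Pochhammer comparison gives $\frac{|(s)_m|}{m!}\le\frac{\Gamma(\sigma)}{|\Gamma(s)|}\frac{(\sigma)_m}{m!}$, which grows at most polynomially, and $u_{A,\sigma}(m)\le u_{A,\sigma}(0)$. If $\ell>1$, every $n\ge b^{\ell-1}\ge2$, so the terms are $O(m^{\sigma-1}2^{-m})$, which is geometric. If $\ell=1$ and $1\notin A$, the smallest admissible $n$ is at least $2$, and the same bound applies. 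If $\ell=1$ and $1\in A$ but $b-1\notin A$, then $f=\max A<b-1$. The $n=1$ term is then bounded by $\frac{\Gamma(\sigma)}{|\Gamma(s)|}\frac{(\sigma)_m}{m!}u_{A,\sigma}(m)$, and Lemma \ref{lem:geo} gives $u_{A,\sigma}(m)=o(\lambda^m)$ with $\lambda=f/(b-1)<1$, so it is $o(m^{\sigma-1}\lambda^m)$. The terms with $n\ge2$ are handled as before. The main point needing care is this last case, since the geometric decay comes from the moments and not from $n^{-m}$; Lemma \ref{lem:geo} supplies it directly. The rest is bookkeeping of absolutely convergent rearrangements.
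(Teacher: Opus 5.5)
Your proposal is correct and follows essentially the paper's proof. You split off the short integers, regroup by the admissible leading block via \eqref{eq:startswithw}, and expand each of the finitely many integrals binomially (directly for $n>1$, by Proposition \ref{prop:conv} for $n=1$), reading off the bracketing and geometric decay from the estimates of Proposition \ref{prop:conv} and Lemma \ref{lem:geo}. The only difference is that you write out the case analysis for geometric convergence explicitly, where the paper just defers to the properties stated in Proposition \ref{prop:conv}.
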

\begin{proof}
  It is only a matter to point out first that for any positive integer $n$ (or
  even any real number $\geq1$) one has:
\begin{equation*}
  \sum_{\substack{m \geq 0\\\text{admissible}}}\frac1{\bigl(n b^{\ell(m)} +m\bigr)^s}
=
\int_{z\in\cW} \bigl(n + x(z)\bigr)^{-s}\dmu_{A,s}(z)
=
\sum_{m=0}^\infty (-1)^m\frac{(s)_m}{m!}\frac{u_{A,s}(m)}{n^{m+s}}\;,
\end{equation*}
where the first equality is justified as was done for \eqref{eq:startswithw}
where $n$ was an admissible integer (that step only needs $n$ to be a positive
real number), and the second equality is easy if $n>1$ from uniform and
absolute convergence of the binomial series, whereas for $n=1$ it is the main fact
stated by Proposition \ref{prop:conv}.

Then for any level $\ell\geq1$, we gather in $K_{b,A,s}$ the subseries
contributed by those admissible integers ``starting with'' the finitely many
admissible integers in $\Iffint{b^{\ell-1}}{b^\ell-1}$.  The properties of the
individual series stated in Proposition \ref{prop:conv} (the only delicate case
being with $\ell=1$, $n=1$, $1\in A$, $b-1\in A$) complete the proof of the
theorem.
\end{proof}
\begin{remark}
  For numerical computations one should anyhow use at least $\ell=2$, except
  perhaps if $b$ is large; and one can, from $(1+x)^{-s} = (2 - (1-x))^{-s}$
  and expansion in powers of $1-x$, hence using ``complementary moments'' as
  in \cite{burnolirwin}, always provide a series with geometric convergence
  also for the partial $K$-series of those $n^{-s}$ with $n$ ``starting with''
  $1$, or more generally (if $1\notin A$) the $L_{b,A}(s)$ series of
  Proposition \ref{prop:conv}.
\end{remark}

\begin{remark}
  With the quantities $u_{A,s}^*(m)$ (of equation \eqref{eq:defu*}) the formula
  of Theorem \ref{thm:main} for $K_{b, A, s}$ becomes:
  \begin{equation}\label{eq:Ku*}
    \begin{split}
      K_{b,A,s} ={} &\sideset{}{'_{0<n<b^{\ell-1}}}\sum \frac1{n^s}
      +  \frac{b^{s}}{b^{s} -N}\sideset{}{'_{b^{\ell-1}\leq n<b^{\ell}}}\sum \frac1{n^s}\\
      &+ \sum_{m=1}^\infty (-1)^m u_{A,s}^*(m)\frac s{s+m}
      (\sideset{}{'_{b^{\ell-1}\leq n<b^{\ell}}}\sum \frac1{n^{s+m}}).
    \end{split}
  \end{equation}
\end{remark}

\section{Moment generating function}

Let again $b>1$ an integer, $\sD=\Iffint 0{b-1}$, $\cW$ the set of words on
the alphabet $\sD$ and $A\subset \sD$, $N=\#A$, $s\in \CC$, $\Re s>
\log_b(N)$.  Recall from Definition \ref{def:1} the map
$x:\cW\to\Ifo01$.
\begin{definition}
  The (exponential) moment generating function $E_{A,s}$ is the entire
  function defined by the formula:
  \begin{equation*}
    E_{A,s}(t) = \int_{\cW} \e^{t x(w)}\dmu_{A,s}(w).
  \end{equation*}
\end{definition}
\begin{proposition}\label{prop:Et}
  Let $\alpha_A(t)=\sum_{a\in A} e^{at}$.
  The following holds:
  \begin{equation}\label{eq:Et}
    \forall t\in \CC\quad E_{A,s}(t) = 1 + \sum_{j\geq1}b^{-js}\prod_{i=1}^j\alpha_A(b^{-i}t).
  \end{equation}
  There is a functional equation:
  \begin{equation}\label{eq:funceq}
    E_{A,s}(t) = 1 + b^{-s}\alpha_A(b^{-1}t) E_{A,s}(b^{-1}t).
  \end{equation}
\end{proposition}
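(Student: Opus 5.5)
The plan is to compute $E_{A,s}(t)$ directly by decomposing the space of words according to length. First I check that $E_{A,s}$ is well defined and entire. For $t$ in a disc $|t|\leq R$ we have $|\e^{tx(w)}|\leq \e^{R}$, because $x(w)\in\Ifo01$. The measure of variations $\mu_{A,\Re s}$ is finite since $\Re s>\log_b N$. So the integral converges absolutely and uniformly on compacts, and each integrand $t\mapsto\e^{tx(w)}$ is entire. Hence $E_{A,s}$ is entire, and we may rearrange the sum over words freely.

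Next I split $\cW$ by word length $j$. The empty word contributes $1$. For $j\geq1$, the admissible words of length $j$ are the $d_1\dots d_j$ with every $d_i\in A$. Each has weight $b^{-js}$ and satisfies $x(w)=\sum_{i=1}^j d_ib^{-i}$. Therefore
\begin{equation*}
  \sum_{\substack{\ell(w)=j\\ w\text{ admissible}}}\e^{tx(w)}
  =\sum_{d_1,\dots,d_j\in A}\prod_{i=1}^j\e^{d_ib^{-i}t}
  =\prod_{i=1}^j\alpha_A(b^{-i}t),
\end{equation*}
which is a finite factorization of a product of sums. Summing over $j$ with the weights $b^{-js}$ gives \eqref{eq:Et}.

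This series converges absolutely for every $t\in\CC$. Indeed $|\alpha_A(b^{-i}t)|\leq N\e^{(b-1)|t|b^{-i}}$, so the $j$-th term is bounded by $(Nb^{-\sigma})^j\e^{|t|}$. The convergence is geometric because $Nb^{-\sigma}<1$. This also re-justifies the interchange of summation above.

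For the functional equation, I replace $t$ by $b^{-1}t$ in \eqref{eq:Et}. Multiplying by $b^{-s}\alpha_A(b^{-1}t)$, the term of index $j$ becomes
\begin{equation*}
  b^{-(j+1)s}\,\alpha_A(b^{-1}t)\prod_{i=1}^j\alpha_A(b^{-i-1}t)=b^{-(j+1)s}\prod_{i=1}^{j+1}\alpha_A(b^{-i}t).
\end{equation*}
The $j=0$ term $b^{-s}\alpha_A(b^{-1}t)$ is the $j=1$ term of \eqref{eq:Et}. So $b^{-s}\alpha_A(b^{-1}t)E_{A,s}(b^{-1}t)$ equals $E_{A,s}(t)-1$, which is \eqref{eq:funceq}.

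Alternatively, \eqref{eq:funceq} follows directly by the prefix decomposition used for Proposition \ref{prop:recu}. One writes $x(aw)=b^{-1}(a+x(w))$, so that
\begin{equation*}
  \int \e^{tx(aw)}b^{-s}\dmu_{A,s}(w)=b^{-s}\e^{ab^{-1}t}E_{A,s}(b^{-1}t).
\end{equation*}
No step is a real obstacle. The only point requiring care is the absolute convergence that justifies regrouping, and the bound by $(Nb^{-\sigma})^j\e^{|t|}$ settles it.
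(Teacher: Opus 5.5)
Your proposal is correct and follows the paper's proof: the paper likewise identifies $\prod_{i=1}^j\alpha_A(b^{-i}t)$ with the sum of $\e^{x(w)t}$ over the admissible words of length $j$, and then obtains the functional equation from the resulting series, as you do by reindexing. Your bound $(Nb^{-\sigma})^j\e^{|t|}$ on the $j$-th term is the one the paper records right after the proof (it uses the slightly sharper factor $\e^{\max(0,\Re t)}$), and it justifies the regrouping.
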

\begin{proof}
  Indeed for $j\geq1$,
  \begin{equation*}
    \prod_{i=1}^j\alpha_A(b^{-i}t) = \sum_{a_1,\dots, a_j\in A}\e^{(a_1b^{-1} + \dots + a_j b^{-j})t}
     = \mathop{\sum\nolimits'}\limits_{\ell(w)=j}\e^{x(w)t}\,.
  \end{equation*}
   This gives \eqref{eq:Et}.  And the functional equation \eqref{eq:funceq} follows.
\end{proof}
In the series from \eqref{eq:Et},
$\bigl|\prod_{i=1}^j\alpha_A(b^{-i}t)\bigr|\leq N^j \e^{\max(0,\Re t)}$, as we
see from the above expansion of the product over the $N^j$ admissible words of
length $j$. One can also reformulate equation \eqref{eq:Et} to read:
\begin{equation}
  \label{eq:Et2}
   E_{A,s}(t) = 1 + \sum_{j\geq1}\frac{1}{(b^s/N)^{j}}\prod_{i=1}^j\frac{\alpha_A(b^{-i}t)}{N}\;.
\end{equation}
Indeed, as $\alpha_A(z)/N = 1 + O_{z\to0}(z)$, the infinite product
$\beta_A(t) \coloneq \prod_{i=1}^\infty \frac{\alpha_A(b^{-i}t)}N$ is convergent,
everywhere in the complex plane (it vanishes at the $b^jz_0$, $j\geq1$,
$\alpha_A(z_0)=0$, which exist except if $A$ is a singleton).  So, either the
series \eqref{eq:Et2} has only finitely many non-zero terms (if $t$ is a zero
of $\beta_A$) or its terms are equivalent up to some multiplicative factor
to those of the geometric series of ratio $N/b^s$.  And we are assuming
$\Re(s)>\log_b(N)$ so this latter series converges.

Suppose (provisorily) that $0\in A$, then for
$\Re t<0$ the infinite product $\gamma_A(t)= \prod_{i=0}^\infty
\alpha_A(b^{i}t)$ is convergent.  Expanding in powers of
$q=\e^t$, $|q|<1$, we recognize the series $\sideset{}{'_{n\geq0}}\sum
q^n$ where the exponents are the \emph{admissible} integers.

One checks that the product $G_A(t)=\beta_A(t) \gamma_A(t)$ (defined for $\Re
t<0$) verifies the functional equation $G_A(bt) = N^{-1}G_A(t)$.  However, we
prefer to not pursue that discussion, in favour of another direction, very
much alike in terms of tools, which does not make the assumption that $0$ is
an admissible digit and is more focused on properties of $E(t)$ for
$t\to+\infty$.

For this, let $f=\max A$, $B = f - A$ so that it contains the zero digit, and
let us write $\sum_{a\in A} \e^{at}$ as $\e^{ft}\alpha_B(-t) =
\e^{ft}\sum_{a'\in B}\e^{-a't}$.  The formula for the moment generating
function becomes:
\begin{equation}
  \label{eq:Et3}
   E_{A,s}(t) =
   1 + \sum_{j\geq1}b^{-js}\e^{f(b-1)^{-1}(1 - b^{-j})t}\prod_{i=1}^j\alpha_B(-b^{-i}t).
\end{equation}
We observe that for $\Re t>0$, defining
$\gamma_B(-t)\coloneq\prod_{i=0}^\infty \alpha_B(-b^i t)$, we obtain a
convergent infinite product, which in the variable $q=\e^{-t}$ is
the subseries of $\sum q^n$ where the exponents are restricted to have their
radix $b$ digits only from the set $B= f-A$ (in particular $n=0$ is there).
Let $\lambda = f/(b-1)\in \Iof01$.  We observe that we can reformulate
\eqref{eq:Et3}, for $\Re t>0$, as:
\begin{equation}
  \label{eq:Et4}
   t^s\e^{-\lambda t}\gamma_B(-t)E_{A,s}(t) =
    \sum_{j=0}^\infty \bigl(b^{-j}t\bigr)^s\e^{ - \lambda b^{-j}t}\gamma_B(-b^{-j}t).  
\end{equation}
We realize that negative $j$'s would give a very quickly convergent series.
Let us exchange $j$ with $-j$ and define for $\Re t>0$
\begin{equation}
  \label{eq:F}
  F_{A,s}(t) = 
  \sum_{j=-\infty}^\infty \bigl(b^{j}t\bigr)^s\e^{-\lambda b^{j}t}\gamma_B(-b^{j}t).
\end{equation}
The ``corrections'' (now associated with positive $j$'s, the original series
being with non-positive $j$'s) go to zero exponentially fast when $t\to+\infty$.
The limit function being invariant under $t\mapsto bt$, it is susceptible to
be expanded, say for $t>0$, into a Fourier series in the variable $\log t$ (or
better, a Laurent series in the variable $t^{2\pi i/\log b}$, but we do not go
into this here).
\begin{proposition}\label{prop:Ft}
  Let $f=\max A$, $\lambda = \frac{f}{b-1}$, $B$ the set $f-A$ and $\cB$ the
  set of $B$-admissible (non-negative) integers. Define
  $\phi_B(q)=\sum_{n\in\cB} q^n$ for $|q|<1$.  For any $t$ with positive real
  part there exists
\[
\lim_{k\to+\infty} (b^k t)^s \e^{-b^k\lambda t}E_{A,s}(b^k t),
\]
and its value is, using the notation $q=\e^{-t}$:
\[
   t^s\e^{-\lambda t}\phi_B(q)E_{A,s}(t) + \sum_{j=1}^\infty
    \bigl(b^{j}t\bigr)^s\e^{-\lambda b^{j}t}\phi_B(q^{b^j}).
\]
This limit $F_{A,s}(t)$ is an analytic function of $t$ on the half-plane $\Re
t>0$ and it is invariant under $t\mapsto bt$.  Restricting to real positive
$t$'s, its Fourier coefficients as a $1$-periodic function of $\log_b t$ are:
\[
\int_0^1 F_{A,s}(b^u)e^{-2\pi i k u}\du 
= \frac{\Gamma(s - 2\pi i \frac k{\log b})}{\log b}
  \sum_{n\in\cB}\frac{1}{(n + \lambda)^{s-2\pi i \frac k{\log b}}}\;.
\]
\end{proposition}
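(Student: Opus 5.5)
Start from \eqref{eq:Et3}, which is valid for all $t$. For $\Re t>0$ it becomes \eqref{eq:Et4} once we multiply by $t^s\e^{-\lambda t}\gamma_B(-t)$. Here $\gamma_B(-t)=\phi_B(q)$ with $q=\e^{-t}$, because expanding the convergent product $\prod_{i\ge0}\alpha_B(-b^it)=\prod_{i\geq0}\sum_{a'\in B}q^{a'b^i}$ gives exactly the sum over $B$-admissible integers. This uses $0\in B$, so that each factor starts with $1$ and $n=0$ is counted. To verify \eqref{eq:Et4} we need
\begin{equation*}
\prod_{i=1}^j\alpha_B(-b^{-i}t)\cdot\gamma_B(-t)=\gamma_B(-b^{-j}t),
\end{equation*}
which is just a reindexing of the product. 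We also need $b^{-js}t^s=(b^{-j}t)^s$, which holds with the principal branch since $b^{-j}>0$. Finally, $\e^{\lambda(1-b^{-j})t}\e^{-\lambda t}=\e^{-\lambda b^{-j}t}$.

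Next replace $t$ by $b^kt$ in \eqref{eq:Et4} and reindex by $j'=k-j$. This gives
\begin{equation*}
(b^kt)^s\e^{-\lambda b^kt}\phi_B(q^{b^k})E_{A,s}(b^kt)=\sum_{j'\le k}(b^{j'}t)^s\e^{-\lambda b^{j'}t}\phi_B(q^{b^{j'}}).
\end{equation*}
As $k\to\infty$ we have $\phi_B(q^{b^k})\to1$, since $|q|<1$ and $\phi_B(q)-1=O(|q|)$. The right-hand side converges to the full two-sided series $F_{A,s}(t)$ of \eqref{eq:F}. For $j'\to+\infty$ the terms decay like $|b^{j'}t|^{\sigma}\e^{-\lambda b^{j'}\Re t}$, which is doubly exponentially small (here $\lambda>0$ because $f\ge1$). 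The terms with $j'\le0$ form the convergent series \eqref{eq:Et4}. Dividing by $\phi_B(q^{b^k})$ gives the existence of the limit. To get the stated value, split off the $j'\le0$ part, which by \eqref{eq:Et4} equals $t^s\e^{-\lambda t}\phi_B(q)E_{A,s}(t)$, and keep the $j'\ge1$ part as it is.

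Analyticity comes from locally uniform convergence on compact subsets of $\Re t>0$. On such a set, $|\phi_B(\e^{-b^jt})|$ is uniformly bounded for $j\geq 0$, and for $j\to-\infty$ it is bounded by $\sum_{n\in\cB}|\e^{-b^jt}|^n$. That bound grows, but it is controlled by the known convergence of $E_{A,s}$. It is cleaner to use the form ``analytic expression $+$ tail with $j\ge1$'', where every piece is analytic and the tail converges uniformly. Invariance under $t\mapsto bt$ is immediate from the two-sided form by shifting the index.

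For the Fourier coefficients, take $t=b^u$ real. Unfold the sum over $j\in\ZZ$ together with $\int_0^1\du$ into $\int_{\RR}$, using that $\e^{-2\pi iku}$ is $1$-periodic:
\begin{equation*}
\int_0^1F(b^u)\e^{-2\pi iku}\du=\int_{-\infty}^{\infty}(b^u)^s\e^{-\lambda b^u}\phi_B(\e^{-b^u})\e^{-2\pi iku}\du.
\end{equation*}
Substitute $x=b^u$, so that $\du=\dx/(x\log b)$ and $\e^{-2\pi iku}=x^{-2\pi ik/\log b}$. Expand $\phi_B(\e^{-x})=\sum_{n\in\cB}\e^{-nx}$ and integrate term by term: each term gives $\Gamma(s')/(n+\lambda)^{s'}$ with $s'=s-2\pi ik/\log b$. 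The result is the stated formula.

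The main obstacle is justifying the unfolding and the termwise integration (Fubini/Tonelli). The absolute version is $\int_0^\infty x^{\sigma-1}\e^{-\lambda x}\phi_B(\e^{-x})\dx=\Gamma(\sigma)\sum_{n\in\cB}(n+\lambda)^{-\sigma}$. This series converges precisely because $\sigma>\log_bN$ and $\#B=N$: it is the restricted zeta series for $B$, whose convergence was established via \eqref{eq:startswithw} (the shift by $\lambda$ is harmless, and the term $n=0$ is fine since $\lambda>0$). Positivity then justifies all the interchanges.
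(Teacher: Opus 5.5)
Your proposal is correct and follows essentially the same route as the paper. You identify the non-positive-index part of the two-sided series \eqref{eq:F} with $t^s\e^{-\lambda t}\phi_B(q)E_{A,s}(t)$ via \eqref{eq:Et4}, let the positive-index tail at $b^kt$ vanish while $\phi_B(q^{b^k})\to1$, and obtain the Fourier coefficients by unfolding the sum over $j\in\ZZ$ into an integral over $\RR$ and integrating the Gamma integrals term by term. You give more detail than the paper (the product reindexing behind \eqref{eq:Et4}, the branch of $t^s$, locally uniform convergence for analyticity, and the Tonelli justification via $\sum_{n\in\cB}(n+\lambda)^{-\sigma}<\infty$), but the argument is the same.
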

\begin{proof}
  In equation \eqref{eq:F}, the non-positive indices contribute
  $t^s\e^{-\lambda t}\phi_B(q)E_{A,s}(t)$, as shown in equation \eqref{eq:Et4}.
  Replacing in \eqref{eq:F} $t$ by some $b^kt$, the value of $F_{A,s}$ remains
  invariant, but the total contribution of positive indices clearly goes to zero as
  $k\to+\infty$.  We have $\phi_B(q^{b^k}) = \gamma_B(-b^k t) = 1 +
  O_{k\to\infty}(\e^{-b^kt})$, so we can remove that term when considering the
  limit of $ (b^k t)^s \e^{-b^k\lambda t}\phi_B(q^{b^k})E_{A,s}(b^k t)$.  This
  gives the existence and value of the limit from the proposition.

  We compute the
  average of $F_{A,s}$ over a period.
\begin{equation*}
  \begin{split}
    \int_0^1 F_{A,s}(b^u)\du
&= \int_{-\infty}^\infty b^{su}\e^{-\lambda b^u}\phi_B(\e^{-b^u})\du 
\\
&= \int_0^\infty t^{s}\e^{-\lambda t}\sum_{n\in\cB} e^{-nt}\frac{\dt}{t\log b}
= \frac{\Gamma(s)}{\log b}\sum_{n\in\cB}\frac{1}{(n + \lambda)^s}\;.
  \end{split}
\end{equation*}
There are no problems of convergence (everything converges absolutely): $B$
has the same cardinality $N$ as $A$, and $\Re s > \log_b N$.  The general
formula for the $k$-th Fourier coefficient is obtained the same way.
\end{proof}

Doubly infinite sums over powers of $b$, similarly to what happens in
\eqref{eq:F}, creating periodicity, the computation of Fourier coefficients,
the contrasting behaviour of the contributions from positive versus negative
indices, all of that has been encountered in various contexts in the literature
and we refer to \cite{hardy1907,mendsebb1999,keatread2000,
  balamendsebb2005,zhang2011,zhang2023} for further considerations.

\section{The case of the zeta function}

Let $b>1$ an integer.  Choosing $A=\sD=\Iffint{0}{b-1}$, $K_{b,A,s}$ is
nothing else than the Riemann zeta function $\zeta(s)$, $\Re s>1$, and after
choosing a level $l\geq2$, the Theorem \ref{thm:main} gives geometrically
converging series representing $\zeta(s)$, in terms of some ``moments''
obeying the recurrence relations \eqref{eq:recu} or \eqref{eq:recu*}.  This
recurrence relation is also embedded in the properties of the
moment-generating function $E_{A,s}(t)$ from the previous section.  As here
$A$ is actually determined by $b$, we change the notation into $E_{b,s}$, and
similarly the function $\alpha_A$ of Proposition \ref{prop:Et} is now:
\begin{equation*}
  \alpha_b(t) = \sum_{a=0}^{b-1}\e^{at} = \frac{\e^{bt}-1}{\e^t - 1}\;.
\end{equation*}
We will also switch from $u_{A,s}(m)$ to $u_{b,m}(s)$ for the moments
(from definition \ref{def:um}) or even $u_m(s)$. So, equation \eqref{eq:Et} becomes
\begin{equation}
  E_{b,s}(t) = \sum_{j=0}^\infty b^{-js}\frac{\e^{t}-1}{\e^{t/b^{j}}-1}
         = \frac{e^t - 1}t
           \sum_{j=0}^\infty b^{-j(s-1)}\frac{t/b^{j}}{\e^{t/b^{j}}-1}\;.
\end{equation}
The $F_{A,s}(t)$ from Proposition \ref{prop:Ft}, now denoted $F_{b,s}(t)$, is:
\begin{equation*}
  F_{b,s}(t)= \frac{t^s}{\e^t -1}E_{b,s}(t) + \sum_{j=1}^\infty
    \frac{\bigl(b^{j}t\bigr)^s}{\e^{b^j t}-1} 
   = \sum_{j=-\infty}^\infty \frac{\bigl(b^{j}t\bigr)^s}{\e^{b^j t}-1} \;.
\end{equation*}
We take note that for $t>0$, it is a discretization of the representation of
$\Gamma(s)\zeta(s)$ as an integral over $(0,\infty)$:
\begin{equation*}
  \Gamma(s)\zeta(s) = \int_0^\infty \frac{u^{s-1}}{\e^u - 1}\du 
  = \int_{-\infty}^\infty \frac{\e^{sx}}{\exp(\e^x)-1}\dx \approx
  \log b \sum_{j=-\infty}^\infty \frac{\bigl(b^{j}t\bigr)^s}{\e^{b^j t}-1}\;,
\end{equation*}
where the approximation becomes exact in the limit $b\to 1^+$.  This
$\Gamma(s)\zeta(s)/\log b$ is the zeroth Fourier coefficient mentioned in
Proposition \ref{prop:Ft}.

We can express the moment generating function
using the Bernoulli numbers:
\begin{align*}
  \sum_{j=0}^\infty b^{-j(s-1)}\frac{t/b^{j}}{\e^{t/b^{j}}-1}
&=
\sum_{j=0}^\infty b^{-j(s-1)} 
\begin{aligned}[t]
  &-\frac t2\sum_{j=0}^\infty b^{-j(s-1)}b^{-j}\\
  &+ \sum_{k=1}^\infty \frac{B_{2k}t^{2k}}{(2k)!}\sum_{j=0}^\infty b^{-j(s-1)}b^{-2kj}
\end{aligned}
\\
&=
\frac{b^{s}}{b^s -b} -\frac t2\frac{b^s}{b^s-1}
+ \sum_{k=1}^\infty \frac{B_{2k}t^{2k}}{(2k)!}\frac{b^{s+2k}}{b^{s+2k}-b}\;.
\end{align*}
Consequently,
\begin{equation}
\label{eq:uber}
\begin{split}
  \sum_{m=0}^\infty \frac{u_m(s)}{m!}t^m = \frac{b^{s}}{b^s-b}\frac{\e^t-1}{t}
              &- \frac{b^s}{2(b^s-1)}(\e^t-1)
  \\
              &+\frac{\e^t-1}{t}\sum_{k=1}^\infty 
                               \frac{B_{2k}t^{2k}}{(2k)!}\frac{b^{s+2k}}{b^{s+2k}-b}\;,
\end{split}
\end{equation}
and, in particular,
\begin{align*}
u_{0}(s) &= \frac{b^s}{b^s-b},\\
u_1(s) &= \frac{b^s(b-1)}{2(b^s-b)(b^s-1)},
\\
u_2(s) &= \frac13\frac{b^s}{b^s-b} - \frac12\frac{b^{s+1}}{b^{s+1}-b} 
        + \frac1{6}\frac{b^{s+2}}{b^{s+2} - b}\;.
\end{align*}
One checks that the given values for the first three moments are compatible
with the general formulas \eqref{eq:u0}, \eqref{eq:u1}, and \eqref{eq:u2}
respectively.

From Theorem \ref{thm:main} (and equations \eqref{eq:defu*},
\eqref{eq:recu*}, \eqref{eq:Ku*}), Propositions \ref{prop:bound}, \ref{prop:lower} and
\ref{prop:upper}, and the above equation \eqref{eq:uber}, we obtain the
concluding statement of this paper:
\begin{theorem}\label{thm:conclusion}
  Let $b>1$ be an integer and $s$ a complex number of real part $\sigma$
  greater than $1$.
  Let
  \begin{align*}
  u_0(s) &= \frac{b^s}{b^s -b}\,,\\
\shortintertext{and, for $m\geq1$:}
  u_m(s) &=  \frac1{m+1}\frac{b^{s}}{b^s -b} - \frac{b^{s+1}}{2(b^{s+1}-b)} +
\sum_{1\leq k \leq \lfloor \frac m2 \rfloor}
     \frac{m!}{(m-2k+1)!}\frac{B_{2k}}{(2k)!}\frac{b^{s+2k}}{b^{s+2k}-b}\;.
  \end{align*}
There holds $|(b^s-b)u_m(s)|\leq (b^\sigma-b)u_m(\sigma)$ for $m\geq0$.\newline
Let, for $m\geq0$, $u_m^*(s) = \frac{(s+1)_m}{m!}u_m(s)$.
They obey the recurrence, for $m\geq1$:
\begin{equation}\label{eq:recum*}
  u_{m}^*(s) = \frac1{b^{m+s} -b}
  \sum_{j=1}^{m} \frac{(s+m)\dots(s+m-j+1)}{j!}u_{m-j}^*(s)(\sum_{0\leq a<b}a^j),
\end{equation}
and verify, for $s=\sigma>1$ real, the uniform estimates:
\begin{equation*}
  \frac{1}{b^\sigma - b} < u_{m}^*(\sigma) \leq \frac{b^\sigma}{b^\sigma- b}\;.
\end{equation*}
For general $s$, $\Re s = \sigma>1$:
\begin{equation*}
  |u_{m}^*(s)| \leq
  \frac{|(s+1)_{m}|}{(\sigma +1)_{m}}\frac{b^\sigma}{|b^s - b|}
  \leq \frac{\Gamma(\sigma +1)}{|\Gamma(s+1)|}\frac{b^\sigma}{|b^s- b|}\;.
\end{equation*}
For any integer $\ell\geq2$, one has the absolutely convergent series representation:
\begin{equation}\label{eq:zetaseries}
        \zeta(s)= \sum_{0<n<b^{\ell-1}} \frac1{n^s}
      + \sum_{m=0}^\infty (-1)^m \frac{s\,u_{m}^*(s)}{s+m}
                                \sum_{b^{\ell-1}\leq n<b^{\ell}} \frac1{n^{m+s}}\;.
\end{equation}
For $s$ real, the partial sums provide alternatingly lower and upper bounds
for $\zeta(s)$.  The general term of this series is, for given complex $s$,
$O(r^m/m)$ with $r=b^{1-\ell}$.
\end{theorem}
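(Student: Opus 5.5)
This theorem is essentially the specialization $A=\sD$, $N=b$ of the results of Section 2, combined with the generating-function identity \eqref{eq:uber}. The plan is therefore to check, item by item, that each claim is an instance of an earlier proposition with $N=b$ and $\sum_{a\in A}a^j=\sum_{0\le a<b}a^j$. The moments are then denoted $u_m(s)=u_{b,m}(s)$.

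First, the explicit formula for $u_m(s)$. I would extract the coefficient of $t^m/m!$ in \eqref{eq:uber}, using $(\e^t-1)/t=\sum_{n\ge0}t^n/(n+1)!$ and $\e^t-1=\sum_{n\ge1}t^n/n!$.
\begin{itemize}
\item The first term of \eqref{eq:uber} contributes $\frac{1}{m+1}\frac{b^s}{b^s-b}$.
\item The second contributes $-\frac{b^s}{2(b^s-1)}$ for $m\ge1$, which is rewritten as $-\frac{b^{s+1}}{2(b^{s+1}-b)}$.
\item The Bernoulli part contributes $\sum_k m!\,\frac{B_{2k}}{(2k)!}\frac{1}{(m-2k+1)!}\frac{b^{s+2k}}{b^{s+2k}-b}$. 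The range $2k\le m$ is automatic from the Cauchy product.
\end{itemize}
The identity \eqref{eq:uber} itself rests on expanding $x/(\e^x-1)$ in Bernoulli numbers and interchanging the sums over $j$ and $k$. This is legitimate for $|t|<2\pi$ because $\sum_j b^{-j(\sigma-1)}$ converges for $\sigma>1$, and that suffices to identify Taylor coefficients.

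Next come the bounds and the recurrence.
\begin{itemize}
\item Proposition \ref{prop:bound} with $N=b$ reads $|u_m(s)|\le\frac{b^\sigma-b}{|b^s-b|}u_m(\sigma)$, which is the first inequality.
\item The recurrence \eqref{eq:recum*} is \eqref{eq:recu*} divided by $b^{m+s}-b$.
\item The upper bound $u_m^*(\sigma)\le b^\sigma/(b^\sigma-b)$ is Proposition \ref{prop:upper} with $f=b-1$, $\lambda=1$ and $\sigma>1$.
\item The lower bound comes from Proposition \ref{prop:lower}, using $\frac{m+\sigma}{\sigma}\frac{(\sigma)_m}{m!}=\frac{(\sigma+1)_m}{m!}$. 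This gives $u_m^*(\sigma)>1/(b^\sigma-b)$.
\end{itemize}

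For complex $s$, I combine the first inequality with the upper bound:
\begin{equation*}
|u_m^*(s)|\le \frac{|(s+1)_m|}{m!}\,\frac{b^\sigma-b}{|b^s-b|}\,u_m(\sigma)=\frac{|(s+1)_m|}{(\sigma+1)_m}\,\frac{b^\sigma-b}{|b^s-b|}\,u_m^*(\sigma)\le\frac{|(s+1)_m|}{(\sigma+1)_m}\,\frac{b^\sigma}{|b^s-b|}.
\end{equation*}
The last step, $|(s+1)_m|/(\sigma+1)_m\le\Gamma(\sigma+1)/|\Gamma(s+1)|$, is the monotonicity fact used in the proof of Proposition \ref{prop:conv}. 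That argument applies since $\Re(s+1)>0$, and each factor ratio $|s+1+k|/(\sigma+1+k)\ge1$.

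Finally, the series \eqref{eq:zetaseries} is \eqref{eq:Ku*} with $A=\sD$, so $K_{b,\sD,s}=\zeta(s)$. The $m=0$ term of the sum reproduces the middle term of \eqref{eq:Ku*}, since $u_0^*=b^s/(b^s-b)$. Theorem \ref{thm:main} with $\ell\ge2$ gives validity and the alternating lower/upper bounds for real $s$.

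The estimate $O(r^m/m)$ follows from the uniform bound on $|u_m^*(s)|$, the factor $|s/(s+m)|=O(1/m)$, and $\sum_{b^{\ell-1}\le n<b^\ell}|n^{-s-m}|\le b^\ell\, b^{-(\ell-1)(\sigma+m)}=O(r^m)$. Absolute convergence follows from this estimate as well.

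I expect the only mildly delicate step to be the Bernoulli coefficient extraction, in particular keeping the indices and the $m\ge1$ versus $m=0$ distinction straight. Everything else is direct substitution into earlier results.
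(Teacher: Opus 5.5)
Your proposal is correct and follows essentially the same route as the paper. You extract the Bernoulli formula from \eqref{eq:uber}, specialize Propositions \ref{prop:bound}, \ref{prop:lower} and \ref{prop:upper} to $N=b$, $f=b-1$, and use the monotone ratio $|(s+1)_m|/(\sigma+1)_m$ for the Gamma bound. You then obtain the series and the alternating bounds from Theorem \ref{thm:main} via \eqref{eq:Ku*}. Your extra justifications — the interchange of sums for $|t|<2\pi$ and the explicit $O(r^m)$ bound on the inner sum — are valid refinements of steps the paper handles briefly.
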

\begin{proof}
  The explicit expression in terms of Bernoulli numbers comes from \eqref{eq:uber}.
  There holds by Proposition \ref{prop:bound}:
  \begin{align*}
    \begin{split}
      \bigl|(b^s-b)u_{m}^*(s)\bigr|
      &=  \left|\frac{(s+1)_m}{m!}\right|\bigl|(b^s-b)u_m(s)\bigr|
      \\
&\leq
      \left|\frac{(s+1)_m}{m!}\right|(b^\sigma-b)u_m(\sigma)
      =
      \frac{|(s+1)_m|}{(\sigma+1)_m}(b^\sigma-b)u_m^*(\sigma).
    \end{split}
  \end{align*}
  We have from Proposition \ref{prop:upper} the bound $(b^\sigma-b)u_m^*(\sigma)\leq b^\sigma$.
  The sequence $m\mapsto |(s+1)_m/(\sigma+1)_m|$
  is non-decreasing and, for any complex number $z$, there holds
  $\frac{(z+1)_m}{m!}\sim_{m\to+\infty}\frac{m^z}{\Gamma(z+1)}$ (with $0\sim 0$
  for $z$ a negative integer). We thus get:
  \begin{equation*}
    \bigl|(b^s-b)u_{m}^*(s)\bigr|
    \leq     \frac{|(s+1)_m|}{(\sigma+1)_m}(b^\sigma-b)u_m^*(\sigma)
    \leq
    \frac{\Gamma(\sigma+1)}{|\Gamma(s+1)|}
    b^\sigma\,.
  \end{equation*}
  The lower bound $u_m^*(\sigma)\geq 1/(b^\sigma-b)$ was established in
  Proposition \ref{prop:lower}.

  The recurrence relation \eqref{eq:recum*} is \eqref{eq:recu*} with $N=b$ and $A=\Iffint{0}{b-1}$.

  The series is the one from Theorem \ref{thm:main}. The $O(r^m/m)$ bound with
  $r=b^{1-\ell}$ is a direct consequence of the fact that for a given $s$ the
  sequence $(u_m^*(s))$ is bounded.  The behaviour of partial sums for $s$
  real was pointed out during the proof of Proposition \ref{prop:conv}.
  This concludes the proof.
\end{proof}
\begin{remark}
    For any $\sigma>1$ and any integer $k$, there
  holds $u_m(s_k)= u_m(\sigma)$ with $s_k = \sigma + k 2\pi i/\log b$, hence,
  one has:
  \begin{equation*}
    |u_{m}^*(s_k)|=  \left|\frac{(s_k+1)_m}{m!}\right|u_m(\sigma) 
    >\frac{|(s_k+1)_m|}{(\sigma+1)_m} \frac{1}{b^\sigma - b}\;,
  \end{equation*}
  and
  \begin{equation*}
    \liminf_{m\to\infty}  |u_{m}^*(s_k)| \geq  \frac{\Gamma(\sigma +1)}{|\Gamma(s_k+1)|}
    \frac{1}{b^\sigma - b}\;.
  \end{equation*}
  As $|\Gamma(s_k)|^{-1}$ grows exponentially when $k\to+\infty$ this
  indicates that using the series numerically, even though it is geometrically
  convergent, will require computing a number of terms equal to some affine
  function of the imaginary part of $s$, if a given fixed-point target
  precision is desired.

  And as the series is constructed from the recurrence \eqref{eq:recum*}, it
  appears that the cost for computing $N$ terms is at least quadratic
  in $N$.
\end{remark}

\footnotesize

\providecommand\bibcommenthead{}
\def\blocation#1{\unskip}

\begin{thebibliography}{13}
\ifx \bisbn   \undefined \def \bisbn  #1{ISBN #1}\fi
\ifx \binits  \undefined \def \binits#1{#1}\fi
\ifx \bauthor  \undefined \def \bauthor#1{#1}\fi
\ifx \batitle  \undefined \def \batitle#1{#1}\fi
\ifx \bjtitle  \undefined \def \bjtitle#1{#1}\fi
\ifx \bvolume  \undefined \def \bvolume#1{\textbf{#1}}\fi
\ifx \byear  \undefined \def \byear#1{#1}\fi
\ifx \bissue  \undefined \def \bissue#1{#1}\fi
\ifx \bfpage  \undefined \def \bfpage#1{#1}\fi
\ifx \blpage  \undefined \def \blpage #1{#1}\fi
\ifx \burl  \undefined \def \burl#1{\textsf{#1}}\fi
\ifx \doiurl  \undefined \def \doiurl#1{\url{https://doi.org/#1}}\fi
\ifx \betal  \undefined \def \betal{\textit{et al.}}\fi
\ifx \binstitute  \undefined \def \binstitute#1{#1}\fi
\ifx \binstitutionaled  \undefined \def \binstitutionaled#1{#1}\fi
\ifx \bctitle  \undefined \def \bctitle#1{#1}\fi
\ifx \beditor  \undefined \def \beditor#1{#1}\fi
\ifx \bpublisher  \undefined \def \bpublisher#1{#1}\fi
\ifx \bbtitle  \undefined \def \bbtitle#1{#1}\fi
\ifx \bedition  \undefined \def \bedition#1{#1}\fi
\ifx \bseriesno  \undefined \def \bseriesno#1{#1}\fi
\ifx \blocation  \undefined \def \blocation#1{#1}\fi
\ifx \bsertitle  \undefined \def \bsertitle#1{#1}\fi
\ifx \bsnm \undefined \def \bsnm#1{#1}\fi
\ifx \bsuffix \undefined \def \bsuffix#1{#1}\fi
\ifx \bparticle \undefined \def \bparticle#1{#1}\fi
\ifx \barticle \undefined \def \barticle#1{#1}\fi
\bibcommenthead
\ifx \bconfdate \undefined \def \bconfdate #1{#1}\fi
\ifx \botherref \undefined \def \botherref #1{#1}\fi
\ifx \url \undefined \def \url#1{\textsf{#1}}\fi
\ifx \bchapter \undefined \def \bchapter#1{#1}\fi
\ifx \bbook \undefined \def \bbook#1{#1}\fi
\ifx \bcomment \undefined \def \bcomment#1{#1}\fi
\ifx \oauthor \undefined \def \oauthor#1{#1}\fi
\ifx \citeauthoryear \undefined \def \citeauthoryear#1{#1}\fi
\ifx \endbibitem  \undefined \def \endbibitem {}\fi
\ifx \bconflocation  \undefined \def \bconflocation#1{#1}\fi
\ifx \arxivurl  \undefined \def \arxivurl#1{\textsf{#1}}\fi
\csname PreBibitemsHook\endcsname

\bibitem[\protect\citeauthoryear{Allouche et~al.}{2025}]{alloshalstip2025}
\begin{barticle}
\bauthor{\bsnm{Allouche}, \binits{J.-P.}},
\bauthor{\bsnm{Shallit}, \binits{J.}},
\bauthor{\bsnm{Stipulanti}, \binits{M.}}:
\batitle{Combinatorics on words and generating {D}irichlet series of automatic
  sequences}.
\bjtitle{Discrete Math.}
\bvolume{348}(\bissue{8}),
\bfpage{114487}--\blpage{16}
(\byear{2025})
\doiurl{10.1016/j.disc.2025.114487}
\end{barticle}
\endbibitem

\bibitem[\protect\citeauthoryear{Balazard et~al.}{2005}]{balamendsebb2005}
\begin{barticle}
\bauthor{\bsnm{Balazard}, \binits{M.}},
\bauthor{\bsnm{Mend\`es~France}, \binits{M.}},
\bauthor{\bsnm{Sebbar}, \binits{A.}}:
\batitle{Variations on a theme of {H}ardy's}.
\bjtitle{Ramanujan J.}
\bvolume{9}(\bissue{1-2}),
\bfpage{203}--\blpage{213}
(\byear{2005})
\doiurl{10.1007/s11139-005-0833-5}
\end{barticle}
\endbibitem

\bibitem[\protect\citeauthoryear{Borwein et~al.}{2000}]{borwbradcran2000}
\begin{barticle}
\bauthor{\bsnm{Borwein}, \binits{J.M.}},
\bauthor{\bsnm{Bradley}, \binits{D.M.}},
\bauthor{\bsnm{Crandall}, \binits{R.E.}}:
\batitle{Computational strategies for the {R}iemann zeta function}.
\bjtitle{J. Comput. Appl. Math.}
\bvolume{121}(\bissue{1-2}),
\bfpage{247}--\blpage{296}
(\byear{2000})
\doiurl{10.1016/S0377-0427(00)00336-8}
\end{barticle}
\endbibitem

\bibitem[\protect\citeauthoryear{Burnol}{2025}]{burnolkempner}
\begin{barticle}
\bauthor{\bsnm{Burnol}, \binits{J.-F.}}:
\batitle{Moments in the exact summation of the curious series of {K}empner
  type}.
\bjtitle{Amer. Math. Monthly}
\bvolume{132}(\bissue{10}),
\bfpage{995}--\blpage{1006}
(\byear{2025})
\doiurl{10.1080/00029890.2025.2554555}
\end{barticle}
\endbibitem

\bibitem[\protect\citeauthoryear{Burnol}{2026a}]{burnolirwin}
\begin{barticle}
\bauthor{\bsnm{Burnol}, \binits{J.-F.}}:
\batitle{Measures for the summation of {I}rwin series}.
\bjtitle{Integers}
\bvolume{26} (\bcomment{Paper No. A11}),
\bfpage{1}--\blpage{20}
(\byear{2026})
\doiurl{10.5281/zenodo.18154150}
\end{barticle}
\endbibitem

\bibitem[\protect\citeauthoryear{Burnol}{2026b}]{burnolzeta_umosc}
\begin{botherref}
\oauthor{\bsnm{Burnol}, \binits{J.-F.}}:
Some series representing the {R}iemann zeta function
(2026).
\url{https://arxiv.org/abs/2602.05511}
\end{botherref}
\endbibitem

\bibitem[\protect\citeauthoryear{Hardy}{1907}]{hardy1907}
\begin{barticle}
\bauthor{\bsnm{Hardy}, \binits{G.}}:
\batitle{On certain oscillating series}.
\bjtitle{Quart. J. Pure Appl. Math}
\bvolume{38},
\bfpage{269}--\blpage{288}
(\byear{1907})
\end{barticle}
\endbibitem

\bibitem[\protect\citeauthoryear{Keating and Reade}{2000}]{keatread2000}
\begin{barticle}
\bauthor{\bsnm{Keating}, \binits{J.P.}},
\bauthor{\bsnm{Reade}, \binits{J.B.}}:
\batitle{Summability of alternating gap series}.
\bjtitle{Proc. Edinburgh Math. Soc. (2)}
\bvolume{43}(\bissue{1}),
\bfpage{95}--\blpage{101}
(\byear{2000})
\doiurl{10.1017/S001309150002071X}
\end{barticle}
\endbibitem

\bibitem[\protect\citeauthoryear{K\"ohler and Spilker}{2009}]{kohlspil2009}
\begin{barticle}
\bauthor{\bsnm{K\"ohler}, \binits{G.}},
\bauthor{\bsnm{Spilker}, \binits{J.}}:
\batitle{Dirichlet-{R}eihen zu {K}empners merkw\"urdiger konvergenter {R}eihe}.
\bjtitle{Math. Semesterber.}
\bvolume{56}(\bissue{2}),
\bfpage{187}--\blpage{199}
(\byear{2009})
\doiurl{10.1007/s00591-009-0059-5}
\end{barticle}
\endbibitem

\bibitem[\protect\citeauthoryear{Mend\`es~France and
  Sebbar}{1999}]{mendsebb1999}
\begin{barticle}
\bauthor{\bsnm{Mend\`es~France}, \binits{M.}},
\bauthor{\bsnm{Sebbar}, \binits{A.}}:
\batitle{Pliages de papiers, fonctions th\^eta et m\'ethode du cercle}.
\bjtitle{Acta Math.}
\bvolume{183}(\bissue{1}),
\bfpage{101}--\blpage{139}
(\byear{1999})
\doiurl{10.1007/BF02392948}
\end{barticle}
\endbibitem

\bibitem[\protect\citeauthoryear{Nathanson}{2021}]{nathanson2021jnt}
\begin{barticle}
\bauthor{\bsnm{Nathanson}, \binits{M.B.}}:
\batitle{Dirichlet series of integers with missing digits}.
\bjtitle{J. Number Theory}
\bvolume{222},
\bfpage{30}--\blpage{37}
(\byear{2021})
\doiurl{10.1016/j.jnt.2020.10.002}
\end{barticle}
\endbibitem

\bibitem[\protect\citeauthoryear{Zhang}{2011}]{zhang2011}
\begin{barticle}
\bauthor{\bsnm{Zhang}, \binits{C.}}:
\batitle{La s\'erie enti\`ere
  {$1+\frac{z}{\Gamma(1+i)}+\frac{z^2}{\Gamma(1+2i)}+\frac{z^3}{\Gamma(1+3i)}+\cdots$}
  poss\`ede une fronti\`ere naturelle!}
\bjtitle{C. R. Math. Acad. Sci. Paris}
\bvolume{349}(\bissue{9-10}),
\bfpage{519}--\blpage{522}
(\byear{2011})
\doiurl{10.1016/j.crma.2011.03.010}
\end{barticle}
\endbibitem

\bibitem[\protect\citeauthoryear{Zhang}{2023}]{zhang2023}
\begin{barticle}
\bauthor{\bsnm{Zhang}, \binits{C.}}:
\batitle{Analytical study of the pantograph equation using {J}acobi theta
  functions}.
\bjtitle{J. Approx. Theory}
\bvolume{296},
\bfpage{105974}--\blpage{21}
(\byear{2023})
\doiurl{10.1016/j.jat.2023.105974}
\end{barticle}
\endbibitem

\end{thebibliography}


\end{document}